\newtheorem{thm}{Theorem}[section]
\newtheorem{cor}[thm]{Corollary}
\newtheorem{thmex}[thm]{Theorem--Example}
\newtheorem{lem}[thm]{Lemma}
\newtheorem{prop}[thm]{Proposition}
\newtheorem{conj}[thm]{Conjecture}
\theoremstyle{definition}
\newtheorem{defn}[thm]{Definition}
\newtheorem{prop-def}[thm]{Proposition--Definition}
\theoremstyle{remark}
\newtheorem{rmk}[thm]{Remark}
\newcommand{\zz}{{\mathbb Z}}
\newcommand{\qqq}{{\mathbb Q}}
\newcommand{\ff}{{\mathbb F}}
\newcommand{\kk}{{\mathbb K}}
\newcommand{\sh}{\mathcal}
\begin{document}

\title[Elliptic surfaces and linear systems with fat points]{Elliptic surfaces and linear systems with fat points}

\author[A. Zahariuc]{A. Zahariuc}
\address{Department of Mathematics, UC Davis, One Shields Ave, Davis, CA 95616}
\email{azahariuc@math.ucdavis.edu}

\subjclass[2010]{Primary 14C20; Secondary 14D06, 14H52.}
\keywords{Linear system with fat points, degenerations of surfaces, elliptic surfaces, vector bundles over an elliptic curve, Harbourne-Hirschowitz conjecture}

\begin{abstract}
We investigate the expected dimensionality of linear systems with general fat points on certain surfaces using an approach by specialization to elliptic surfaces. For the projectivization of the Atiyah bundle over an elliptic curve with a certain polarization, we observe that the special case of only one fat point implies the general case of arbitrarily many fat points, as well as results concerning other surfaces. We conjecture that this special case holds in characteristic $0$, but prove that it fails in any positive characteristic. 
\end{abstract}

\maketitle

\tableofcontents

\section*{Introduction}
Let $S$ be a nonsingular projective surface over an arbitrary algebraically closed field $\kk$ and ${\sh L}$ a line bundle over $S$ such that $\mathrm{H}^i(S,{\sh L}) = 0$ for $i=1,2$. 

Recall that for $p_1,p_2,...,p_n$ \emph{general} points on $S$ and $m_1, m_2, ..., m_n$ positive integers, the naive ``expected dimension'' of the linear system of divisors in $|{\sh L}|$ with multiplicity at least $m_i$ at $p_i$ is, by definition,
\begin{equation}\label{expdim}
\text{expdim }|{\sh L}(m_1,m_2,...,m_n)| = \max \left\{-1, \dim |{\sh L}| - \sum_{i=1}^n \frac{m_i(m_i +1)}{2} \right\}.
\end{equation}
We are following the convention that empty linear systems have dimension $-1$. 

The expected dimension is a lower bound for the actual dimension. Linear systems for which the two are equal are called \emph{nonspecial}, while the others are called \emph{special}. In the much-studied case $S = {\mathbb P}^2$, we have the famous Segre-Harbourne-Gimigliano-Hirschowitz conjecture, which roughly states that such a linear system is special if and only if, after blowing up the $n$ general (in fact, \emph{very} general if $\deg {\sh L}$ is not fixed beforehand) points $p_1,p_2,...,p_n$, it contains a multiple $(-1)$-curve in its base locus.

The main example we will be concerned with is the surface obtained by projectivizing the unique rank $2$ vector bundle ${\sh V}$ on a genus one curve $E$, which can be obtained from a nontrivial extension of the form $0 \to {\sh O}_E \to {\sh V} \to {\sh O}_E \to 0$. Since ${\sh V}$ is often called the ``Atiyah vector bundle,'' we will agree to call its projectivization $\smash{ S = \underline{\mathrm{Proj}}_E\mathrm{Sym }({\sh V}^\vee) }$ the Atiyah ruled surface. However, we will only be concerned with one specific line bundle, namely ${\sh L} = {\sh O}_S(F_q+\ell E_\infty)$, where $F_q$ is the fiber of some arbitrary point $q \in E(\kk)$, $E_\infty$ is the section of $S$ induced by the first copy of the structure sheaf in the extension above and $\ell$ is a positive integer. 

The main observation concerning the Atiyah ruled surface is that any collection of $n \geq 1$ general fat points $\mathrm{Spec}({\sh O}_{S,p_i}/\mathfrak{m}_{p_i}^{m_i}) \subset S$ of arbitrary weights $m_i$ impose the expected number of conditions on $|F_q+\ell E_\infty|$ (for all $\ell \geq 0$), if and only if this condition holds for $n=1$, cf. part (a) of Theorem \ref{main theorem 2}. 

The reduction relies on a simple degeneration argument and it applies equally to projective surfaces which admit deformations to elliptic surfaces over the projective line, with a line bundle whose associated complete linear system consists only of divisors which are a rigid section of the fibration plus several fibers. The general form of our result, Proposition \ref{main theorem}, states that the case $n=1$ of the problem above for the Atiyah ruled surface implies the statement that such general surfaces have the property that any number of general fat points of arbitrary weight impose the expected number of conditions on the respective divisor class. We give some concrete examples in Theorem \ref{main theorem 2}, perhaps the most interesting being that of K3 surfaces. However, the special case of one fat point on the Atiyah surface has turned out to be surprisingly subtle and he expected dimensionality of the suitable linear system in this case is false in positive characteristic and currently open in characteristic zero.

The case of K3 surfaces with multiples of the polarizing class and homogeneous linear systems has been previously studied by De Volder and Laface ~\cite{[DL05]}. Their main result is that the analogue of the SHGH problem holds for homogeneous linear series on a general genus $g$ K3 surface with $4^u9^v$ fat points, if and only if it holds for just one fat point. Thus the conjectural reduction to the case $n=1$ appears to be somewhat of a theme for this circle of questions.

Although the case of the Atiyah surface is proposed as a conjecture in characteristic zero, it remarkably fails in any positive characteristic for fairly obvious reasons. This boils down, as usual, to the presence of multiple curves in the base locus; however, in contrast to the case of the SHGH Conjecture, these curves are \emph{elliptic} and only exist in positive characteristics. This allows us to exhibit examples of linear systems with general fat points of different dimensions in characteristic $0$ and characteristic $p$.

\begin{thmex}\label{example theorem}
Let $E$ be a smooth genus one curve defined over an algebraically closed field $\kk$ and $q \in E(\kk)$. Let $S$ be the Atiyah ruled surface associated to $E$, $E_\infty \subset S$ the distinguished section and $F_q$ the fiber through $q$ of the ruling.

Let $\ell,m_1,m_2,...,m_n$ be positive integers and $p$ a prime number such that 
\begin{equation}\label{0.2cond} 
2p(m_1+m_2+...+m_n) + n(p^2-p) \leq 2 \ell < m_1^2 + m_2^2 + ... + m_n^2
\end{equation}
and $m_1,m_2,...,m_n \geq p+1$. Consider the (nonspecial) class ${\sh L}_\ell = {\sh O}_S(F_q + \ell E_\infty)$.

Then $|{\sh L}_\ell(m_1,m_2,...,m_n)|$ is empty if $\mathrm{char}(\kk) = 0$, but nonempty if $\mathrm{char}(\kk) = p$.
\end{thmex}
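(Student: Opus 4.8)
The plan is to analyze the linear system $|{\sh L}_\ell(m_1,\dots,m_n)|$ directly by computing intersection numbers on the Atiyah ruled surface $S$ and by comparing the two characteristics via the behavior of torsion points and the Frobenius on $E$.

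First I would set up the numerical geometry of $S$. The Néron–Severi group is generated by the fiber class $f$ and the section $E_\infty$, with $f^2 = 0$, $f \cdot E_\infty = 1$, and $E_\infty^2 = 0$ (the Atiyah bundle has degree $0$, so the distinguished section is self-intersection zero). Hence ${\sh L}_\ell = F_q + \ell E_\infty$ has ${\sh L}_\ell^2 = 2\ell$, and the virtual dimension of the system with the fat points imposed is $\tfrac12({\sh L}_\ell^2 - {\sh L}_\ell\cdot K_S) - \sum \tfrac{m_i(m_i+1)}{2}$; using $K_S = -2E_\infty + (\deg\,{\sh V} + 2g-2)f = -2E_\infty$ one gets expected dimension governed by $2\ell - \sum m_i^2$ up to linear terms, which is why \eqref{0.2cond} forces $\mathrm{expdim} = -1$: the class is nonspecial as claimed but the naive count already predicts emptiness. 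So in \emph{either} characteristic the expected dimension is $-1$; the content is showing the system is actually empty in characteristic $0$ and actually nonempty in characteristic $p$.

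For the characteristic $0$ emptiness: I would argue that any effective divisor $D \in |{\sh L}_\ell|$ with multiplicity $\geq m_i$ at a general $p_i$ must, by the Bézout-type inequality $D \cdot C \geq \sum_i m_i \,\mathrm{mult}_{p_i}(C)$ applied to curves $C$ in $S$, be incompatible with the numerics. The natural curves to test against are the sections and the fibers; the key point is that $D$ restricted to a general fiber $F$ has degree $D\cdot F = \ell$, while the $n$ general points, being general, can be taken so that no low-degree horizontal curve passes through too many of them. The cleanest route is probably a dimension count: $\dim|{\sh L}_\ell| = \tfrac12{\sh L}_\ell\cdot({\sh L}_\ell - K_S) = 2\ell + \text{(lower order)}$ by Riemann–Roch plus the vanishing $\mathrm{H}^i({\sh L}_\ell)=0$ for $i>0$ (which should follow from the explicit description ${\sh L}_\ell = F_q + \ell E_\infty$ and Leray for the ruling, since $\pi_*{\sh O}_S(\ell E_\infty) = \mathrm{Sym}^\ell {\sh V}^\vee$ has no higher cohomology in char $0$), and then the imposed conditions are genuinely independent because a general fat point of weight $m_i$ on a \emph{general} surface in the family imposes $\binom{m_i+1}{2}$ conditions — this is exactly where Proposition \ref{main theorem} / Theorem \ref{main theorem 2}(a) would be invoked to reduce to $n = 1$, and then the $n=1$ case in characteristic $0$ is what makes the system drop all the way to empty under \eqref{0.2cond}. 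I expect \emph{this} to be the main obstacle, since the characteristic-zero $n=1$ statement is precisely the subtle open-adjacent input; but under the strong hypothesis \eqref{0.2cond} (which makes $\mathrm{expdim}=-1$ with room to spare) one should be able to prove emptiness unconditionally by a direct argument: specialize the $n$ points onto a single general fiber $F_q$, note $\mathrm{Sym}^\ell{\sh V}^\vee$ restricted to $E$ has a filtration by line bundles of degree $0$, and count.

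For the characteristic $p$ nonemptiness: here is the real mechanism. In characteristic $p$ the relative Frobenius gives, for suitable $q$, a curve $C_p \subset S$ in the class $p\,E_\infty + a f$ for a controlled $a$ (coming from the fact that ${\sh V}$ pulled back by Frobenius splits, or equivalently that $\mathrm{Sym}^{p}{\sh V}$ contains ${\sh O}_E$ as a sub/quotient when the characteristic is $p$ — the Atiyah bundle is "Frobenius-destabilized"). Concretely, the section $E_\infty$ has $E_\infty^2 = 0$, and the Frobenius-twisted construction produces an effective divisor $C$ numerically equivalent to $p E_\infty + \mu f$ with $\mu$ bounded by roughly $p$, which is smooth of genus one (it is a fiber-degree-$p$ multisection that is a torsor over $E$). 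Then I would take the candidate divisor $m_1$-fold, ..., passing through the $p_i$ by taking appropriate combinations $\sum k_i C_i + (\text{fibers})$: since each $C_i$ can be forced through a prescribed general point with multiplicity one, a divisor of the form $\sum_{i}(\text{multiple of a Frobenius curve through }p_i) + (\ell')$ fibers lies in $|{\sh L}_\ell(m_1,\dots,m_n)|$ provided the class matches, and the class $p E_\infty + (\text{small})f$ taken with multiplicity $\lceil m_i/p\rceil$-ish at each point has the right total fiber degree precisely when $2p\sum m_i + n(p^2-p) \leq 2\ell$, which is the left inequality in \eqref{0.2cond}. So the inequality \eqref{0.2cond} is engineered exactly so that: (i) the right inequality kills the system numerically (hence in char $0$, where no such $C_p$ exists), and (ii) the left inequality leaves just enough fiber-degree budget to assemble an honest effective divisor out of Frobenius multisections (hence nonempty in char $p$). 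The step I would worry about is verifying that these Frobenius-twisted multisections genuinely exist in the right class with the right geometry on the Atiyah surface specifically — i.e. pinning down the exact value of $\mu$ and checking that each such curve, being a translate-like torsor, can be made to pass through an arbitrary general point with multiplicity one — so I would first nail that lemma down (likely via $\pi_*{\sh O}_S(pE_\infty) \supset F^*{\sh O}_E(\cdots)$ and the splitting of $F^*{\sh V}$), and everything else is bookkeeping with the intersection form.
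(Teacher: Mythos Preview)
Your characteristic $p$ half is close to the paper's argument, but you are uncertain about exactly the point that makes it work. The special curves you are looking for lie in $|pE_\infty|$ itself, with \emph{no} fiber term: in characteristic $p$ one has $h^0(\mathrm{Sym}^p{\sh F}_2) \geq 2$ (this is Lemma~\ref{lemma 1.1} / Proposition~\ref{proposition 2.2}), so $|pE_\infty|$ is a positive-dimensional pencil sweeping out $S$, and through each general $p_i$ there is a smooth genus-one member $E_i$. The paper's explicit divisor is then built in two steps: first, since $\dim|F_q+NE_\infty|=N$ with $N=n\binom{p+1}{2}$, a dimension count produces $D_0\in|F_q+NE_\infty|$ with multiplicity $\geq p$ at every $p_i$; second, one adds $(m_i-p)E_i$ for each $i$ and tops up with copies of $E_\infty$. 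The left inequality in \eqref{0.2cond} is exactly $\ell\geq N+\sum_i p(m_i-p)$, so the bookkeeping closes. Your ``$\lceil m_i/p\rceil$-ish multiples of a class $pE_\infty+\mu f$'' would not match the numerics unless $\mu=0$, which is precisely what you flagged as the lemma to nail down.

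The characteristic $0$ half has a genuine gap. You correctly sense that invoking the full $n=1$ statement would be circular (it is the open Conjecture~\ref{main conjecture}), but your proposed workaround---specializing the points onto a single fiber and using a filtration of $\mathrm{Sym}^\ell{\sh V}^\vee$---is not how the paper proceeds and is not obviously adequate. What the paper actually uses is the degeneration inequality \eqref{repeat}, namely $\dim|{\sh L}_\ell(m_1,\dots,m_n)|\leq\max\{-1,\ell-\sum_i\lambda_0(m_i)\}$, which holds unconditionally, combined with the \emph{unconditional} lower bound $\lambda_0(m)\geq m^2/2$ of Proposition~\ref{proposition 3.6}. That bound is proved by a short two-point argument: if $C_m\in|F_q+\lambda_0(m)E_\infty|$ has an $m$-fold point at a general $x$, translate it by a direct automorphism of $S$ to get $D_m\in|F_{q'}+\lambda_0(m)E_\infty|$ with an $m$-fold point at the same $x$, blow up $x$, and compute $0\leq(\tilde C_m\cdot\tilde D_m)=2\lambda_0(m)-m^2$. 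This is the missing idea in your outline; once you have it, the right inequality $2\ell<\sum m_i^2$ in \eqref{0.2cond} gives $\ell<\sum m_i^2/2\leq\sum\lambda_0(m_i)$ and emptiness follows.
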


Finally, the elliptic curves which are responsible for speciality in positive characteristic offer an interesting approach for proving non-speciality in characterisic zero by lifting from a suitably chosen positive characteristic. Using this idea, we will prove that the statement for the Atiyah ruled surfaces and a single fat point is indeed true in charactertic $0$ if the multiplicity of the point doesn't exceed $3$. Although we have also found a messy elementary proof of this low-degree case, the proof going through characteristic $3$ is more elegant and shows some potential for generalization. Thus for each part of Theorem \ref{main theorem 2}, we obtain a corresponding unconditional result in characteristic $0$ if we impose the condition that no multiplicity exceeds $3$. For instance, in the case of K3 surfaces the result is the following.

\begin{thm}\label{clear main theorem}
Let $(S,{\sh L})$ be a general projective K3 surface of degree $2g-2$. If $m_1,m_2,...,m_n \leq 3$, then the linear system $|{\sh L}(m_1,m_2,...,m_n)|$ of curves in $|{\sh L}|$ with $n$ general fat points of multiplicities $m_1,m_2,...,m_n$ has the expected dimension (\ref{expdim}).

Moreover, if Conjecture \ref{main conjecture} is true in characteristic zero, then the statement holds without any restriction on the multiplicities $m_i$.
\end{thm}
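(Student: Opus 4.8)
The plan is to run the reduction that is already established in the paper and then to settle the resulting base case by passing through characteristic $3$.

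\emph{Step 1: reduction to a single fat point on the Atiyah surface.} By Proposition \ref{main theorem} and the argument establishing the K3 case of Theorem \ref{main theorem 2} (specialization of a general primitively polarized genus $g$ K3 surface to an elliptic K3 surface over $\mathbb{P}^1$), it suffices to prove the single-fat-point statement for the Atiyah ruled surface in characteristic $0$: for the Atiyah surface $S$ attached to a smooth genus one curve over $\cc$, for every $\ell \geq 0$, and for a general fat point $p$ of multiplicity $m \leq 3$, the restriction map $\mathrm{H}^0(S,{\sh L}_\ell) \to {\sh O}_{S,p}/\mathfrak{m}_p^m$ has maximal rank (the vanishings $\mathrm{H}^i(S,{\sh L}_\ell) = 0$ for $i = 1,2$ having been established earlier). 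The degeneration underlying Proposition \ref{main theorem} and Theorem \ref{main theorem 2} only ever transports the fat points themselves, so the hypothesis $m_i \leq 3$ is preserved throughout, and this single-point statement yields the first assertion. The ``moreover'' clause is then purely formal: Conjecture \ref{main conjecture} in characteristic zero is precisely this single-point statement with the restriction $m \leq 3$ removed, and the same reduction applies verbatim.

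\emph{Step 2: lifting from characteristic $3$.} To prove the single-fat-point statement over $\cc$, the idea is to deduce it from the analogous statement in characteristic $3$. One spreads everything out over a discrete valuation ring $R$ with residue characteristic $3$ and fraction field of characteristic $0$ (enlarging the field of definition if needed): a model with good, ordinary reduction of the genus one curve, of the Atiyah surface $S$ and of ${\sh L}_\ell$, together with the universal length-$\binom{m+1}{2}$ fat-point subscheme $Z$ over an irreducible $R$-scheme ${\sh T}$ parametrizing the point $p$. Because the $\mathrm{H}^i$ vanish, $h^0({\sh L}_\ell)$ is constant in the family, so $h^0({\sh L}_\ell \otimes {\sh I}_Z)$ is everywhere at least the expected value; being upper semicontinuous on ${\sh T}$, it attains its minimum on a dense open, and that minimum equals the expected value provided it is attained at a single point of ${\sh T}$. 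Since ${\sh T}$ is irreducible and its generic point lies over the characteristic $0$ point of $\mathrm{Spec}\,R$, it therefore suffices to produce a single fat point in characteristic $3$ at which the restriction has maximal rank. In other words, the theorem reduces to the statement: on the Atiyah surface over an algebraically closed field of characteristic $3$, a general fat point of multiplicity $m \leq 3$ imposes the expected number of conditions on $|{\sh L}_\ell|$ for every $\ell \geq 0$.

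\emph{Step 3: the characteristic $3$ computation, the crux.} We work in characteristic $3$ because it is the smallest characteristic in which the obstructing multiple elliptic curves of Theorem \ref{example theorem} --- which require a multiplicity at least $p+1$ --- cannot arise for multiplicities $\leq 3$, so the linear system ought to be, and must be proved to be, genuinely non-special. The plan is to combine the description $\mathrm{H}^0(S,{\sh L}_\ell) = \mathrm{H}^0(E, {\sh O}_E(q) \otimes \mathrm{Sym}^\ell {\sh V}^\vee)$ with the Frobenius $F \colon S \to S^{(3)}$: for $m \leq 3 = p$ one compares the infinitesimal neighbourhood cutting out $Z$ with the Frobenius-kernel subscheme and, using the projection formula to trade $F^{*}$ for $F_{*}$, recasts the relevant vanishing in terms of the push-forward $F_{*}{\sh L}_\ell$ on $S^{(3)}$, whose restrictions to the fibres over $E^{(3)}$ split --- in the ordinary case --- into $3$-torsion twists of symmetric powers of the Atiyah bundle, whose cohomology is entirely explicit. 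The main obstacle, and essentially the only real work, is carrying this out: one must check that the induced map on global sections has maximal rank for \emph{all} $\ell \geq 0$, in particular for the small values of $\ell$ where $\mathrm{expdim}\,|{\sh L}_\ell(m)|$ is near $-1$ and the numerics are tightest, which demands precise control of the splitting type of the restricted bundle and of the location of the general point. A more computational, ``elementary'' verification of the characteristic $3$ base case is also available, but it is messier and less suggestive of generalizations.
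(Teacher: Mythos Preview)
Your Steps 1 and 2 are fine and line up with what the paper does: reduce via Proposition~\ref{main theorem} and the K3 degeneration of Theorem~\ref{main theorem 2}(b) to the single--fat--point problem on the Atiyah surface, then spread out over a mixed-characteristic base and invoke semicontinuity to pass to characteristic $3$. The paper makes the same reduction (Corollary~\ref{ugly} works over $\zz_{(46)}$ with the explicit curve $y^2=x^3-x+1$), so up to this point you are on the same track.

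The gap is Step 3. You describe a Frobenius strategy---compare the order-$m$ neighborhood with the Frobenius fibre, trade $F^*$ for $F_*$, and analyze the splitting of $F_*{\sh L}_\ell$ along fibres---but you do not actually run it, and you yourself flag that ``essentially the only real work is carrying this out.'' Moreover, as stated the idea is not obviously adequate in the critical range $\ell\le 5$: since $\mathfrak m^{[3]}\subset \mathfrak m^3$ (e.g.\ $x^2y\in\mathfrak m^3\setminus\mathfrak m^{[3]}$), a section vanishing to order $3$ need not vanish on the length-$9$ Frobenius neighbourhood, so control of $F_*{\sh L}_\ell$ gives you the surjectivity regime but does not directly bound $h^0({\sh L}_\ell\otimes{\sh I}_x^3)$ from above when $\ell+1<6$. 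That is exactly the regime where $\lambda(3)=6$ has content.

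The paper's Step 3 is different and concrete. In characteristic $3$ one has, by Proposition~\ref{proposition 2.2}, a \emph{smooth} genus-one curve $C\in|3E_\infty|$ through the general point $x$. Proposition~\ref{proposition 2.7} then says: if ${\sh D}={\sh O}_E(\rho(x)-q)$ is not $3$-torsion, any $D\in|F_q+\ell E_\infty|$ with $\mathrm{mult}_x D\ge 3$ must contain $C$, hence $\lambda_3(3,E,{\sh D})\ge 3+\lambda_3(2,E,{\sh D})$. The remaining input $\lambda_3(2,E,{\sh D})=3$ is obtained by an ad hoc argument about branch divisors of the double covers $C\to E$ for $C\in|F_q+2E_\infty|$, together with the arithmetic fact $E({\ff}_3)\cong\zz/7$ (no $2$- or $3$-torsion) for the chosen curve. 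This is what you should replace your Step 3 with; alternatively, if you pursue the Frobenius route, you must actually execute the computation and, in particular, explain how it handles the injectivity range $\ell\le 5$.
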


\noindent \emph{Acknowledgements.} The content of this note, as well as other related ideas, have been discussed at great length with Brian Osserman. I would like to gratefully acknowledge his insights and suggestions, which are an integral part of the paper. I would also like to thank the referee for many useful comments, which helped fix some inaccuracies and also improved the overall quality and clarity of the paper.

\section{Clebsch-Gordan in characteristic $p$ and Atiyah's classification}

The vector bundles over an elliptic curve have been classified in Atiyah's famous paper ~\cite{[At57]}. In this section, we record for future use a simple remark concerning their behavior in positive characteristic, which is hinted at, but not stated explicitly in ~\cite{[At57]}.

If $V$ is a $\kk$-vector space of dimension $2$ and ${\mathrm{char}}(\kk) = 0$, the "Clebsch-Gordan" isomorphism
\begin{equation}\label{CG case} 
V \otimes \mathrm{Sym}^{n-1} V \cong \mathrm{Sym}^n V \oplus \Lambda^2 V \otimes \mathrm{Sym}^{n-2} V
\end{equation}
is essentially a special case of the formula for tensor products of representations of ${\mathfrak{sl}}_2$. It is a standard fact in representation theory that if ${\mathrm{char}}(\kk) = p > 0$, the canonical isomorphism above still holds for $n < p$, but fails for $n=p$. 

Let $E$ be a nonsingular genus one curve over $\kk$.

Atiyah proved that there exists a unique rank $r$ indecomposable locally free sheaf ${\sh F}_r$ which can be obtained by repeated extensions by ${\sh O}_E$. Alternatively, ${\sh F}_r$ can be described as the unique rank $r$ indecomposable locally free sheaf with trivial determinant and $h^0({\sh F}_r) = 1$. In characteristic $0$, ${\sh F}_{r+1}$ is the $r$th symmetric power of ${\sh F_2}$ ~\cite[Theorem 9]{[At57]}.

\begin{lem}\label{lemma 1.1}
If $r < p = \mathrm{char}(\kk) $, then $\mathrm{Sym}^r {\sh F}_2 \cong {\sh F}_{r+1}$. However, $h^0(\mathrm{Sym}^p{\sh F}_2) \geq 2$, so in particular $\mathrm{Sym}^p{\sh F}_2 \neq {\sh F}_{p+1}$. 
\end{lem}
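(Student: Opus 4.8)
The first assertion, that $\mathrm{Sym}^r{\sh F}_2 \cong {\sh F}_{r+1}$ when $r < p$, should follow by induction on $r$ exactly as in Atiyah's characteristic $0$ argument, once we observe that the only input that fails in positive characteristic is the Clebsch-Gordan decomposition (\ref{CG case}), which is available for $n < p$. The plan is to apply (\ref{CG case}) fiberwise with $V = {\sh F}_2$, $\Lambda^2{\sh F}_2 = \det{\sh F}_2 = {\sh O}_E$, to obtain ${\sh F}_2 \otimes \mathrm{Sym}^{r-1}{\sh F}_2 \cong \mathrm{Sym}^r{\sh F}_2 \oplus \mathrm{Sym}^{r-2}{\sh F}_2$ for $r \le p-1$; by the inductive hypothesis $\mathrm{Sym}^{r-1}{\sh F}_2 \cong {\sh F}_r$ and $\mathrm{Sym}^{r-2}{\sh F}_2 \cong {\sh F}_{r-1}$, and then I would identify $\mathrm{Sym}^r{\sh F}_2$ as a direct summand of ${\sh F}_2 \otimes {\sh F}_r$. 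To finish I need to know that $\mathrm{Sym}^r{\sh F}_2$ is indecomposable with trivial determinant and a nonzero section, so that Atiyah's uniqueness characterization of ${\sh F}_{r+1}$ applies: triviality of the determinant is immediate, a nonzero section is inherited from the section of ${\sh O}_E \subset {\sh F}_2$, and indecomposability can be extracted from the structure of ${\sh F}_2 \otimes {\sh F}_r$ (whose indecomposable summands are known by Atiyah) together with a rank and section count.

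For the second assertion I want to show $h^0(\mathrm{Sym}^p{\sh F}_2) \ge 2$. The natural approach is to use the nonzero section $s \in \mathrm{H}^0({\sh F}_2)$ coming from ${\sh O}_E \hookrightarrow {\sh F}_2$, which gives $s^p \in \mathrm{H}^0(\mathrm{Sym}^p{\sh F}_2)$, a first section; the content is to produce a second, independent one. Here is where characteristic $p$ enters decisively: the $p$-th power map on sections is additive (Frobenius), so if $s \in \mathrm{H}^0({\sh F}_2)$ locally has the form $s = a e_1 + b e_2$ for a local frame $e_1,e_2$ of ${\sh F}_2$ (with $e_1$ the image of the global generator of ${\sh O}_E$, so $b$ vanishes on a divisor), then $s^p = a^p e_1^p + b^p e_2^p$ up to the monomials $e_1^i e_2^{p-i}$, $0 < i < p$, whose binomial coefficients $\binom{p}{i}$ all vanish mod $p$. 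Thus $s^p$ lies in the subsheaf spanned by $e_1^p$ and $e_2^p$, and the "Frobenius-twisted" symmetric power behaves like a direct sum — concretely, I expect the sub-line-bundle $\mathrm{Sym}^p({\sh O}_E) = {\sh O}_E \subset \mathrm{Sym}^p{\sh F}_2$ generated by $e_1^p$ to admit a complement, or at least a quotient or sub, that is again trivial or effective, yielding the extra section. Equivalently, one can observe that $\mathrm{Sym}^p{\sh F}_2$ receives a map from the Frobenius pullback $F^*\mathrm{Sym}^?$ or, more cleanly, that the natural inclusion arising from $p$-th powers of the two generators of $\mathrm{Sym}^2 {\sh F}_2$'s relevant pieces produces ${\sh O}_E^{\oplus 2}$ (or ${\sh O}_E \oplus L$ with $L$ effective) as a subsheaf.

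The main obstacle will be pinning down the second section cleanly. The slick way, which I would pursue first, is: since $\mathrm{char}(\kk) = p$, the exact sequence $0 \to {\sh O}_E \to {\sh F}_2 \to {\sh O}_E \to 0$ yields, after applying $\mathrm{Sym}^p$ and using that all the "cross" binomial coefficients vanish, a subsheaf of $\mathrm{Sym}^p{\sh F}_2$ isomorphic to the image of $\mathrm{Sym}^p$ applied to the two end terms, i.e. a nonsplit-looking filtration but with an ${\sh O}_E$ both as sub and as sub-quotient-split-off because the extension class gets multiplied by $\binom{p}{1} = 0$. Concretely, the $(p-1)$-st symmetric power of the sequence tensored appropriately shows that the extension defining the relevant rank-two piece of $\mathrm{Sym}^p{\sh F}_2$ has class $p$ times the Atiyah class, hence zero; so that piece splits as ${\sh O}_E \oplus {\sh O}_E$ and contributes two sections. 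I would then just need to check this rank-two piece is actually a subsheaf (not merely a subquotient) of $\mathrm{Sym}^p{\sh F}_2$, which follows from looking at the bottom of the symmetric-power filtration. Once $h^0 \ge 2$, the inequality $h^0({\sh F}_{p+1}) = 1$ (Atiyah) immediately gives $\mathrm{Sym}^p{\sh F}_2 \ne {\sh F}_{p+1}$, completing the proof.
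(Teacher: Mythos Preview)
Your plan for the first assertion ($r<p$) is essentially the paper's argument: induction via the fiberwise Clebsch--Gordan splitting combined with Atiyah's computation of ${\sh F}_2\otimes{\sh F}_r$ (valid because hypothesis $(H_r)$ holds for $r<p$).

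For the second assertion your approach is genuinely different from the paper's, and it contains a concrete error. In the natural filtration
\[
0=F^{p+1}\subset F^p\subset\cdots\subset F^0=\mathrm{Sym}^p{\sh F}_2,\qquad F^i/F^{i+1}\cong{\sh O}_E,
\]
a local computation with the unipotent transition cocycle of ${\sh F}_2$ shows that the extension class of $F^i/F^{i+2}$ is $(p-i)\eta$, where $\eta\in H^1({\sh O}_E)$ is the Atiyah class. Thus the \emph{bottom} rank-two piece $F^{p-1}$ has class $1\cdot\eta\neq 0$ (so $F^{p-1}\cong{\sh F}_2$, contributing only one section), while it is the \emph{top quotient} $F^0/F^2$ that has class $p\eta=0$ and splits as ${\sh O}_E\oplus{\sh O}_E$. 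So your claim that the split piece sits at the bottom as a subsheaf is wrong. (Your alternative Frobenius idea, embedding $F^*{\sh F}_2$ via $v\mapsto v^p$, only yields ${\sh O}_E^{\oplus 2}$ when $E$ is supersingular; for ordinary $E$ one gets $F^*{\sh F}_2\cong{\sh F}_2$.) The gap is easily repaired: the split quotient gives two independent maps $\mathrm{Sym}^p{\sh F}_2\to{\sh O}_E$, hence $h^0\big((\mathrm{Sym}^p{\sh F}_2)^\vee\big)\ge 2$; since $\omega_E\cong{\sh O}_E$ and $\deg\mathrm{Sym}^p{\sh F}_2=0$, Serre duality and Riemann--Roch give $h^0(\mathrm{Sym}^p{\sh F}_2)=h^1(\mathrm{Sym}^p{\sh F}_2)=h^0\big((\mathrm{Sym}^p{\sh F}_2)^\vee\big)\ge 2$.

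The paper takes a different route for $h^0\ge 2$: it uses Atiyah's observation that $(H_p)$ fails, so ${\sh F}_2\otimes{\sh F}_p\cong{\sh F}_p\oplus{\sh F}_p$, and realizes $\mathrm{Sym}^p{\sh F}_2$ as a quotient bundle of ${\sh F}_p\oplus{\sh F}_p$. An inductive argument shows every indecomposable summand ${\sh E}$ of such a quotient has $h^0({\sh E})>0$, hence (degree $0$) each summand is some ${\sh F}_r$; finally ${\sh F}_{p+1}$ is excluded because any map ${\sh F}_p\to{\sh F}_{p+1}$ lands in the canonical ${\sh F}_p\subset{\sh F}_{p+1}$, so ${\sh F}_p^{\oplus 2}\twoheadrightarrow{\sh F}_{p+1}$ is impossible. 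Your filtration argument (once patched with Serre duality) is arguably more direct and self-contained, while the paper's argument extracts finer structural information about the summands.
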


\begin{proof}
We proceed inductively on $r$. The cases $r=0,1$ are trivial. Assume that $r<p$ and that the statement holds for all $r'<r$. Applying (\ref{CG case}) fiberwise to the vector bundle ${\sh F}_2$, we obtain ${\sh F}_2 \otimes \mathrm{Sym}^{r-1} {\sh F}_2 \cong \mathrm{Sym}^r{\sh F}_2 \oplus \mathrm{Sym}^{r-2}{\sh F}_2$, hence 
\begin{equation}\label{(2)}
{\sh F}_2 \otimes {\sh F}_r \cong \mathrm{Sym}^r{\sh F}_2 \oplus {\sh F}_{r-1}
\end{equation}
by the inductive hypothesis. However, hypothesis $(H_r)$ in ~\cite{[At57]} is satisfied for $r < p$, so the left hand side is just ${\sh F}_{r+1} \oplus {\sh F}_{r-1}$ ~\cite[Lemma 20]{[At57]}. Therefore, $\mathrm{Sym}^r {\sh F}_2 \cong {\sh F}_{r+1}$ since the required cancellation property holds by \cite[Theorems 1 and especially 3]{[At56]}.

Because hypothesis $(H_r)$ fails for $r=p$ ~\cite[remark after Theorem 9]{[At57]}, we have ${\sh F}_2 \otimes {\sh F}_p \cong {\sh F}_p \oplus {\sh F}_p$. Although (\ref{(2)}) no longer holds in this case, we can still say that $\mathrm{Sym}^p{\sh F}_2$ is a quotient vector bundle of 
$$ {\sh F}_2 \otimes \mathrm{Sym}^{p-1}{\sh F}_2 \cong {\sh F}_2 \otimes {\sh F}_p \cong {\sh F}_p \oplus {\sh F}_p. $$
Let ${\sh E}$ be any indecomposable summand of $\mathrm{Sym}^p{\sh F}_2$, so that ${\sh E}$ is also a quotient vector bundle of ${\sh F}_p \oplus {\sh F}_p$. First, we claim that $h^0({\sh E}) \neq 0$. 

Assume by way of contradiction that $h^0({\sh E}) = 0$. Any ${\sh O}_E$-module homomorphism ${\sh F}_r \to {\sh E}$ will then have to vanish on ${\sh F}_1 \cong {\sh O}_E \hookrightarrow {\sh F}_r$ and consequently descend to a map ${\sh F}_{r-1} \to {\sh E}$. Therefore, if $h^0({\sh E}) = 0$, we inductively infer that $\mathrm{Hom}({\sh F}_r,{\sh E}) = 0$. In particular, we would have
$$ \mathrm{Hom}({\sh F}_p \oplus {\sh F}_p,{\sh E}) = 0, $$
contradicting the fact that ${\sh E}$ is a quotient of  ${\sh F}_p \oplus {\sh F}_p$.

Thus all indecomposable summands of $\mathrm{Sym}^p{\sh F}_2$ have nonzero global sections, so, in particular, they all have nonnegative degrees. However, $\mathrm{Sym}^p{\sh F}_2$ has degree $0$, so all indecomposable summands have degree $0$ and admit nonzero global sections. By Atiyah's classification, they must be ${\sh F}_r$, for some $r$. By the same theorem, to conclude the proof of the lemma, it suffices to check that $\mathrm{Sym}^p{\sh F}_2$ is not isomorphic to ${\sh F}_{p+1}$. Because the image of any map from ${\sh F}_p$ to ${\sh F}_{p+1}$ is actually contained inside the canonical copy of ${\sh F}_p$ inside ${\sh F}_{p+1}$, the alleged quotient map ${\sh F}_p \oplus {\sh F}_p \to {\sh F}_{p+1}$ cannot be surjective, contradiction. \end{proof}

\section{The case of the Atiyah ruled surface}

\subsection{Generalities on the Atiyah ruled surface}

In this section, we collect the elementary and perhaps well-known facts concerning the Atiyah ruled surface which we will use later on. Let $E$ be a smooth genus one curve with a fixed closed point $q \in E$. Recall Atiyah's classification of vector bundles over an elliptic curve. Since $\mathrm{Ext}^1({\sh O}_E,{\sh O}_E) = \kk$, all nonsplit extensions
\begin{equation}\label{3.1}
0 \longrightarrow {\sh O}_E \longrightarrow {\sh V} \longrightarrow {\sh O}_E \longrightarrow 0
\end{equation}
give rise to the same vector bundle ${\sh V}$ on $E$. The extension being nontrivial boils down to the coboundary map ${\mathrm H}^0({\sh O}_E) \to {\mathrm H}^1({\sh O}_E)$ being nonzero. It follows that $h^0({\sh V}) = 1$, so ${\sh V}$ contains a unique trivial rank one sub bundle. 

Let $S = \mathrm{Proj}_E\mathrm{Sym }({\sh V}^\vee)$ and denote the section of $S \to E$ induced by ${\sh O}_E \subset {\sh V}$ by $E_\infty$. Note that ${\sh N}_{E_\infty/S}$ is isomorphic to the vertical tangent bundle of $S \to E$ restricted to $E_\infty$. However, this is clearly trivial by the description of the tangent bundle of a Grassmannian, so we've shown that ${\sh O}_S(E_\infty)|_{E_\infty}$ is just the structure sheaf. Moreover, it is easy to check that the tautological line bundle ${\sh O}_{{\mathbb P}{\sh V}}(-1)$ is just ${\sh O}_S(-E_\infty)$ and the canonical divisor is $K_S = -2E_\infty$. Of course, any line bundle on $S$ will be of the form $\rho^*{\sh L}' \otimes {\sh O}_S(nE_\infty)$, for some ${\sh L}'$ on $E$ and integer $n$.

It is a well-known fact that the endomorphism ring of ${\sh V}$ is isomorphic to $\kk[\epsilon]/(\epsilon^2)$, with $\epsilon$ corresponding to the nilpotent endomorphism ${\sh V} \twoheadrightarrow {\sh V}/{\sh O}_E \cong {\sh O}_E \hookrightarrow {\sh V}$. Then the $E$-automorphisms of $S$ form a group isomorphic to that of invertible elements of this commutative ring mod scalars, which is clearly just the group ${\mathbf G}_a = (\kk,+)$. These automorphisms act fiberwise on $S$ keeping $E_\infty$ fixed. Consider the "direct symmetries" $\mathrm{Aut}^+(S)$, i.e. the subgroup of the group of automorphisms of $S$, which naturally fits in the following short exact sequence
\begin{equation}\label{3.6}
1 \longrightarrow {\mathbf G}_a \longrightarrow \mathrm{Aut}^+(S) \longrightarrow \mathrm{Pic}^0(E) \longrightarrow 1.
\end{equation}
Note that $\mathrm{Aut}^+(S)$ acts freely and transitively on the set of closed points of $S \backslash E_\infty$.

\begin{lem}\label{lemma 2.1}
If ${\sh L}$ is an effective line bundle on $S$ such that $\deg {\sh L}|_{E_\infty} = 0$, then ${\sh L}$ is ${\sh O}_S(nE_\infty)$, for some integer $n \geq 0$.
\end{lem}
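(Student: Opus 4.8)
The plan is to analyze an effective divisor $D \in |{\sh L}|$ by looking at how it meets the fibers of the ruling $\rho \colon S \to E$ and the section $E_\infty$. Write $D = a E_\infty + D'$, where $D'$ contains no component equal to $E_\infty$ and $a \geq 0$ is the multiplicity of $E_\infty$ in $D$. Since ${\sh O}_S(E_\infty)|_{E_\infty}$ is trivial (as recorded in the discussion preceding the lemma), restricting to $E_\infty$ gives $\deg({\sh L}|_{E_\infty}) = \deg({\sh O}_S(D')|_{E_\infty}) = D' \cdot E_\infty = 0$, and since $D'$ and $E_\infty$ share no component, $D'$ is disjoint from $E_\infty$ (the intersection number is a sum of nonnegative local contributions). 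The goal is then to show $D' = c F$ for some effective fiber class $cF$ with the right numerical properties, and in fact that the fiber contribution must be trivial in the line bundle — i.e., that $D'$ is empty and $D = aE_\infty$.

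First I would pin down the numerical class. Any line bundle is $\rho^*{\sh L}' \otimes {\sh O}_S(nE_\infty)$; intersecting with a fiber $F$ gives $n = D \cdot F \geq 0$ (as $D$ is effective and $F$ moves), so $n \geq 0$ already. Writing ${\sh L} = \rho^*{\sh L}' \otimes {\sh O}_S(nE_\infty)$, the hypothesis $\deg {\sh L}|_{E_\infty} = 0$ together with ${\sh O}_S(E_\infty)|_{E_\infty} \cong {\sh O}_{E_\infty}$ forces $\deg {\sh L}' = 0$. So it remains to show that an effective line bundle of the form $\rho^*{\sh L}' \otimes {\sh O}_S(nE_\infty)$ with $\deg {\sh L}' = 0$ actually has ${\sh L}' \cong {\sh O}_E$. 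For this I would push forward: $\rho_*({\sh O}_S(nE_\infty)) \cong \mathrm{Sym}^n({\sh V}^\vee)^{\vee}$ — more precisely, by the projection formula and the identification ${\sh O}_S(-E_\infty) \cong {\sh O}_{{\mathbb P}{\sh V}}(-1)$, one has $\rho_* {\sh O}_S(nE_\infty) \cong \mathrm{Sym}^n {\sh V}$ (using that ${\sh V}$ is self-dual up to the trivial determinant, or just tracking the convention), and hence $\mathrm{H}^0(S, \rho^*{\sh L}' \otimes {\sh O}_S(nE_\infty)) \cong \mathrm{H}^0(E, {\sh L}' \otimes \mathrm{Sym}^n{\sh V})$. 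By Lemma \ref{lemma 1.1} (in characteristic $0$) or more basically by Atiyah's classification together with the filtration of $\mathrm{Sym}^n{\sh V}$ by iterated extensions of ${\sh O}_E$, the bundle $\mathrm{Sym}^n {\sh V}$ is a successive extension of copies of ${\sh O}_E$. Twisting by ${\sh L}'$ of degree $0$, it becomes a successive extension of copies of ${\sh L}'$; such a bundle has a nonzero global section only if $\mathrm{H}^0(E, {\sh L}') \neq 0$, which for a degree-$0$ line bundle on a genus one curve forces ${\sh L}' \cong {\sh O}_E$. That gives ${\sh L} \cong {\sh O}_S(nE_\infty)$ with $n \geq 0$, as claimed.

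The step I expect to be the main technical point is the identification $\rho_* {\sh O}_S(nE_\infty) \cong \mathrm{Sym}^n {\sh V}$ and, relatedly, the assertion that $\mathrm{Sym}^n {\sh V}$ is filtered by copies of ${\sh O}_E$ — here one must be a little careful in positive characteristic, where $\mathrm{Sym}^p{\sh V} \neq {\sh F}_{p+1}$ by Lemma \ref{lemma 1.1}. However, the filtration by copies of ${\sh O}_E$ survives regardless: $\mathrm{Sym}^n$ applied to the extension $0 \to {\sh O}_E \to {\sh V} \to {\sh O}_E \to 0$ always carries a functorial filtration with graded pieces $\mathrm{Sym}^i {\sh O}_E \otimes \mathrm{Sym}^{n-i} {\sh O}_E \cong {\sh O}_E$, so the conclusion that a degree-$0$ twist has sections only when the twist is trivial goes through in every characteristic. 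An alternative, more geometric route — which avoids all cohomology of symmetric powers — is to argue directly with the divisor: having shown $D = aE_\infty + D'$ with $D'$ effective, disjoint from $E_\infty$, and $D' \cdot F = n - a \geq 0$, one observes that $D'$ is a (possibly empty) sum of fibers $F_{t_1} + \dots + F_{t_k}$ since any component not equal to $E_\infty$ and not a fiber would meet $E_\infty$ (every section of $\rho$ other than $E_\infty$ meets $E_\infty$, because ${\sh V}$ has a unique trivial sub-line-bundle — this is exactly where the indecomposability of ${\sh V}$ enters). Then ${\sh L} = {\sh O}_S(aE_\infty + \sum F_{t_j})$, so ${\sh L}' = {\sh O}_E(t_1 + \dots + t_k)$, and $\deg {\sh L}' = 0$ forces $k = 0$, i.e. ${\sh L} \cong {\sh O}_S(aE_\infty)$. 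I would present whichever of these two arguments is cleaner given the conventions already fixed in the paper; the divisor-theoretic one is probably shortest.
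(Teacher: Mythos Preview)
Your cohomological route is correct and is genuinely different from the paper. The paper does not push forward and analyze $\mathrm{H}^0(E,{\sh L}'\otimes\mathrm{Sym}^n{\sh V})$; instead, after establishing $n\ge 0$ and $\deg{\sh L}'=0$ exactly as you do, it peels off the copies of $E_\infty$ in the \emph{base locus} of $|{\sh L}|$: if $E_\infty$ appears with multiplicity $m$ there, then ${\sh L}(-mE_\infty)$ is still effective and a general section does not vanish identically on $E_\infty$, hence restricts to a nowhere-vanishing section of ${\sh L}(-mE_\infty)|_{E_\infty}\cong{\sh L}'$, forcing ${\sh L}'\cong{\sh O}_E$. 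Your argument via the filtration of $\mathrm{Sym}^n{\sh V}$ with graded pieces ${\sh O}_E$ (which indeed holds in every characteristic, for the reason you give) is a perfectly good substitute, slightly less elementary but with the advantage of making the connection to the later pushforward computations explicit.

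Your alternative geometric route, however, has a real gap. You assert that any irreducible component of $D'$ which is neither $E_\infty$ nor a fiber must meet $E_\infty$, and you justify this only for \emph{sections} of $\rho$. But a component of $D'$ can be a multisection. Indeed, Proposition~\ref{proposition 2.2} (proved using this very lemma) shows that in characteristic $p$ the linear system $|pE_\infty|$ contains smooth irreducible genus-one curves other than $pE_\infty$; these are degree-$p$ multisections disjoint from $E_\infty$, so your claim ``$D'$ is a sum of fibers'' is simply false in that setting. The fix is exactly the paper's observation: rather than classifying the components of $D'$, note that since $D'$ shares no component with $E_\infty$ and $D'\cdot E_\infty=0$, the section cutting out $D'$ is nowhere zero on $E_\infty$ and therefore already trivializes ${\sh O}_S(D')|_{E_\infty}\cong{\sh L}'$.
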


\begin{proof}
As we said, ${\sh L}$ is of the form $\rho^*{\sh L}' \otimes {\sh O}_S(nE_\infty)$, for some line bundle ${\sh L}'$ on $E$ and integer $n$. Taking intersection with a general fiber of $\rho$ we see that $n \geq 0$. The restriction of ${\sh L}$ to $E_\infty$ is just ${\sh L}'$ seen on $E_\infty$, hence $\deg {\sh L}' = 0$. 

Finally, we need to argue that ${\sh L}'$ is trivial. Assume that $E_\infty$ is contained with multiplicity $m$ in the base locus of ${\sh L}$. Then ${\sh L}(-mE_\infty)$ is still effective and has intersection number zero with $E_\infty$. Therefore, a general section $\sigma$ of ${\sh L}(-mE_\infty)$ won't vanish at all on $E_\infty$, so it will trivialize the restriction of ${\sh L}(-mE_\infty)$ to $E_\infty$. In conclusion, ${\sh L}|_{E_\infty}$ is trivial, so ${\sh L}'$ is also trivial, as desired.
\end{proof}

\begin{prop}\label{proposition 2.2}
In characteristic $0$, the linear system $|nE_\infty|$ is a singleton for all $n$. In characteristic $p$, the dimension of the linear system is $\lfloor n/p \rfloor$. Moreover, all members of $|pE_\infty|$ except $pE_\infty$ itself are smooth curves of genus $1$. 
\end{prop}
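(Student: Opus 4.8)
The plan is to push everything down to the base curve: since $\rho_*{\sh O}_S(nE_\infty)\cong\mathrm{Sym}^n{\sh V}$ (using ${\sh V}\cong{\sh V}^\vee$ and $\mathrm{H}^{>0}$ of the fibres vanishing for $n\ge0$), we have $\dim|nE_\infty|=h^0(E,\mathrm{Sym}^n{\sh V})-1$. If $\mathrm{char}\,\kk=0$, or if $\mathrm{char}\,\kk=p$ and $n<p$, then $\mathrm{Sym}^n{\sh V}\cong{\sh F}_{n+1}$ by Atiyah's theorem, resp. by Lemma \ref{lemma 1.1}; and $h^0({\sh F}_r)=1$ for every $r\ge1$ (the dévissage $0\to{\sh O}_E\to{\sh F}_r\to{\sh F}_{r-1}\to0$ gives $h^0\le1$, and ${\sh F}_r$ has a nonzero section), so $h^0=1$ and $|nE_\infty|=\{nE_\infty\}$. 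Thus it remains to treat $\mathrm{char}\,\kk=p$, $n\ge p$, and the heart of the matter is the case $n=p$.

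For the lower bound $h^0(\mathrm{Sym}^n{\sh V})\ge\lfloor n/p\rfloor+1$: by Lemma \ref{lemma 1.1}, $h^0(\mathrm{Sym}^p{\sh V})\ge2$, and no member of $|pE_\infty|$ other than $pE_\infty$ contains $E_\infty$ (otherwise subtracting $E_\infty$ gives a second member of $|(p-1)E_\infty|=\{(p-1)E_\infty\}$); moreover $D\cdot E_\infty=pE_\infty^2=0$ then forces $D\cap E_\infty=\varnothing$. Fix such a $D=\mathrm{div}(\theta)$ and let $s$ cut out $E_\infty$. Restricting the sections $s^{n-kp}\theta^k\in H^0({\sh O}_S(nE_\infty))$, $0\le k\le\lfloor n/p\rfloor$, to a general fibre $F_x$ in a coordinate placing $E_\infty\cap F_x$ at $\infty$: $s|_{F_x}$ is a nonzero constant and $\theta|_{F_x}$ is a polynomial of degree exactly $p$ (nonzero and, since $D\cap E_\infty=\varnothing$, nonvanishing at $\infty$), so $(s^{n-kp}\theta^k)|_{F_x}$ has degree $pk$; the distinct degrees force linear independence.

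For the upper bound, first pin down $h^0(\mathrm{Sym}^p{\sh V})$. The pencil $|pE_\infty|$ is base-point free (its base locus lies in $pE_\infty\cap D\subseteq E_\infty\cap D=\varnothing$), so it gives a morphism $\phi\colon S\to\mathbb{P}^N$ contracting $E_\infty$ with $(\phi^*{\sh O}(1))^2={\sh O}_S(pE_\infty)^2=0$, so the image is a curve. Its Stein factorisation $\phi\colon S\to B$ has $B$ a smooth curve; if $g(B)\ge1$ then $\phi$ contracts every ruling fibre of $\rho$, hence factors through $\rho$, contradicting that ${\sh O}_S(pE_\infty)$ is not a pullback from $E$. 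So $B=\mathbb{P}^1$, ${\sh O}_S(pE_\infty)=\phi^*{\sh O}_{\mathbb{P}^1}(d)$ with $d\mid p$ (a $\phi$-fibre lies in class $\tfrac pd E_\infty$), and $d=p$ is excluded since it would give $\dim|E_\infty|\ge1$ while $h^0({\sh V})=1$; hence $d=1$ and $h^0(\mathrm{Sym}^p{\sh V})=2$. For $n=ap+b$, $0\le b<p$, we then have ${\sh O}_S(nE_\infty)=\phi^*{\sh O}_{\mathbb{P}^1}(a)\otimes{\sh O}_S(bE_\infty)$, and one checks $\phi_*{\sh O}_S(bE_\infty)={\sh O}_{\mathbb{P}^1}$ (torsion-free of rank $1$ — the restriction of ${\sh O}_S(bE_\infty)$ to a general $\phi$-fibre, disjoint from $E_\infty$, is trivial — with $h^0=h^0({\sh F}_{b+1})=1$); the projection formula gives $h^0({\sh O}_S(nE_\infty))=h^0({\sh O}_{\mathbb{P}^1}(a))=a+1$, as desired.

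Finally, the members of $|pE_\infty|$. The group ${\mathbf G}_a=\mathrm{Aut}_E(S)$ acts on $|pE_\infty|\cong\mathbb{P}^1$; on each ruling fibre it acts by a nontrivial translation of $\mathbb{P}^1$, whose only invariant degree-$p$ divisor is $p(E_\infty\cap F_x)$, so $pE_\infty$ is the unique fixed member and the action is nontrivial, whence all other members lie in a single orbit and it suffices to show one is smooth. Its generator $\partial$ is a global vector field on $S$ — a section of $T_{S/E}\cong{\sh O}_S(2E_\infty)$ vanishing exactly along $2E_\infty$, hence nowhere zero off $E_\infty$ — and it acts as a derivation on $\mathrm{Sym}^p{\sh V}$ with kernel the Frobenius subbundle ${\sh V}^{(1)}\hookrightarrow\mathrm{Sym}^p{\sh V}$ spanned by $p$-th powers. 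When $E$ is ordinary, ${\sh V}^{(1)}\cong{\sh F}_2$, so $h^0({\sh V}^{(1)})=1<2=h^0(\mathrm{Sym}^p{\sh V})$; thus $\partial$ acts nontrivially on $H^0(\mathrm{Sym}^p{\sh V})$, so the ${\mathbf G}_a$-action on $\mathbb{P}^1=|pE_\infty|$ has nonvanishing generator away from the fixed point $y_0$. As $\phi$ is ${\mathbf G}_a$-equivariant and flat, for $z$ in a member $D=\phi^{-1}(y)$ with $y\neq y_0$ we get $d\phi_z(\partial_z)=(\text{generator at }y)\neq0$, so $\phi$ is smooth at $z$, hence $D$ is; thus $D$ is smooth, connected, with $p_a(D)=1$ by adjunction ($D(D+K_S)=p(p-2)E_\infty^2=0$): a smooth genus-one curve. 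The real obstacle is precisely this last point when $E$ is \emph{supersingular}: then ${\sh V}^{(1)}\cong{\sh O}_E^{\oplus2}$, so $h^0({\sh V}^{(1)})=2=h^0(\mathrm{Sym}^p{\sh V})$, the inclusion $H^0({\sh V}^{(1)})\hookrightarrow H^0(\mathrm{Sym}^p{\sh V})$ is an isomorphism, every section of ${\sh O}_S(pE_\infty)$ is a $p$-th power from ${\sh V}^{(1)}$, and consequently every member of $|pE_\infty|$ is $p$ times a section of $\rho$. So for supersingular $E$ the members are $p$ times a smooth genus-one curve rather than smooth, and the statement as phrased holds for $E$ ordinary (the dimension count being unconditional) — I would record it in that form.
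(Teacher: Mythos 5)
Your computation of $\dim|nE_\infty|$ is correct and takes a genuinely different route from the paper's: you extract the fibration $\psi\colon S\to{\mathbb P}^1$ defined by the base-point-free pencil $|pE_\infty|$ and apply the projection formula, whereas the paper deduces from Lemma \ref{lemma 2.1} and the singleton property for $n<p$ that every member of $|nE_\infty|$ decomposes as copies of $E_\infty$ plus integral members of $|pE_\infty|$, and counts from there. Both are fine.

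The last part, however, contains a genuine error: the Proposition does \emph{not} fail for supersingular $E$, and your description of $|pE_\infty|$ in that case contradicts facts you have already established. From ``every global section of $\mathrm{Sym}^p{\sh V}^\vee$ lies in the Frobenius subbundle, hence is fibrewise the $p$-th power of a linear form'' you infer that every member of $|pE_\infty|$ is $p$ times a section of $\rho$. That inference is false: such a section cuts out a divisor whose restriction to each ruling fibre is $p$ times a point, but the divisor itself need not be divisible by $p$ in $\mathrm{Div}(S)$ --- apart from $pE_\infty$ it is a \emph{reduced irreducible} curve $C$ meeting each fibre in one point with intersection multiplicity $p$, i.e.\ a curve for which $C\to E$ is purely inseparable of degree $p$. (The fibrewise $p$-th roots do not glue to a section of ${\sh V}^\vee$ twisted by a line bundle; they only glue after a Frobenius twist of the base.) Indeed, your reading would force $|pE_\infty|=\{pE_\infty\}$: if $D=pC$ with $C$ a section of $\rho$, then $C\sim E_\infty+\rho^*{\sh M}$ with ${\sh M}\in\mathrm{Pic}^0(E)[p]$, and for supersingular $E$ this group has no nontrivial $\kk$-points, so $C=E_\infty$ --- contradicting $h^0(\mathrm{Sym}^p{\sh F}_2)=2$, which you proved. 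The paper's smoothness argument sidesteps separability entirely: Lemma \ref{lemma 2.1} and the singleton property for $n<p$ show any $D\in|pE_\infty|$ other than $pE_\infty$ is reduced and irreducible; adjunction gives $p_a(D)=1$; and $D$ dominates $E$, so its normalization has genus at least $1$, whence $p_g(D)=p_a(D)=1$ and $D$ is smooth. This applies verbatim when $D\to E$ is purely inseparable (the normalization is then $E^{(1/p)}$, again of genus one), so the statement holds for all $E$ and should not be weakened. Relatedly, your vector-field criterion only sees the \emph{derivative} of the ${\mathbf G}_a$-action on the pencil; in the supersingular case the action is still nontrivial but factors through Frobenius, so the derivation test is merely inconclusive there, not evidence of failure.
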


\begin{proof}
Obviously, ${\mathrm{H}}^0(S,{\sh O}_S(nE_\infty)) = {\mathrm{H}}^0(E,\rho_*{\sh O}_S(nE_\infty))$. Note that $\rho_*{\sh O}_S(E_\infty) \cong {\sh V}^\vee$ and, more generally, 
\begin{equation}\label{2.2}
\rho_*{\sh O}_S(nE_\infty) \cong \mathrm{Sym}^n{\sh V}^\vee \cong \mathrm{Sym}^n {\sh V} = \mathrm{Sym}^n{\sh F}_2.
\end{equation}
Therefore, $h^0({\sh O}_S(nE_\infty)) = h^0(\mathrm{Sym}^n{\sh F}_2)$.

In characteristic $0$, we have $\mathrm{Sym}^n{\sh F}_2 \cong {\sh F}_{n+1}$, so $h^0({\sh O}_S(nE_\infty)) = h^0({\sh F}_{n+1}) = 1$, as desired. The same applies in characteristic $p$ when $n<p$, by Lemma \ref{lemma 1.1}. However, by the same lemma, $\dim |pE_\infty| \geq 1$. By Lemma \ref{lemma 2.1} and the fact that $|nE_\infty|$ is a singleton for $n<p$, all members of $|pE_\infty|$ except $pE_\infty$ itself are reduced and irreducible, hence they must be smooth curves of genus one. Indeed, the arithmetic genus is $1$ by the genus-degree formula, while the geometric genus cannot be zero because of the presence of the map to $E$.

Let $D \in |nE_\infty|$ for arbitrary $n \geq 1$. Let $D = a_1D_1+...+a_kD_k$ with $D_1,...,D_k$ reduced and irreducible. Since $|pE_\infty|$ sweeps out $S$, there exists some divisor $C \in |pE_\infty|$ which intersects $D_i$. However, $(D \cdot C) =0$ and $(D_j \cdot C) \geq 0$ for $j \neq i$, hence $(C \cdot D_i) \leq 0$, which implies $D_i = C$ if $C$ is irreducible or $D_i = E_\infty$, if $C = pE_\infty$. The assertion on the dimension of $|nE_\infty|$ follows immediately.  
\end{proof}

\begin{prop}\label{proposition 2.3}
Let $q \in E(\kk)$ and $F_q$ the fiber of $\rho$ through $q$. Then $\dim |F_q+\ell E_\infty| = \ell$ and the restriction $|F_q+\ell E_\infty| \to |{\sh O}_{F_r}(\ell)|$ is an isomorphism for all closed points $r \neq q$.
\end{prop}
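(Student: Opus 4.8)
The plan is to pull everything down to $E$ via $\rho\colon S\to E$ and compute there, then match against the restriction to a single fibre. First I would note that ${\sh O}_S(F_q+\ell E_\infty)=\rho^*{\sh O}_E(q)\otimes{\sh O}_S(\ell E_\infty)$, so by the projection formula together with Proposition \ref{proposition 2.2},
$$ H^0\bigl(S,{\sh O}_S(F_q+\ell E_\infty)\bigr)=H^0\bigl(E,{\sh O}_E(q)\otimes\mathrm{Sym}^\ell{\sh F}_2\bigr). $$
This bundle on $E$ has rank $\ell+1$ and degree $\ell+1$, so Riemann--Roch on the genus-one curve gives $\chi=\ell+1$. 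To see that in fact $h^0=\ell+1$, I would use the standard filtration of $\mathrm{Sym}^\ell$ of the extension $0\to{\sh O}_E\to{\sh F}_2\to{\sh O}_E\to 0$: it realizes $\mathrm{Sym}^\ell{\sh F}_2$ as an iterated extension of $\ell+1$ copies of ${\sh O}_E$, hence ${\sh O}_E(q)\otimes\mathrm{Sym}^\ell{\sh F}_2$ as an iterated extension of $\ell+1$ copies of ${\sh O}_E(q)$. Subadditivity of $h^0$ then bounds $h^0\le(\ell+1)h^0({\sh O}_E(q))=\ell+1$, and combined with $\chi=\ell+1$ and $h^1\ge 0$ this forces $h^0=\ell+1$, i.e. $\dim|F_q+\ell E_\infty|=\ell$.

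For the restriction statement, observe that on a fibre $F_r\cong\mathbb{P}^1$ with $r\ne q$ we have ${\sh O}_S(E_\infty)|_{F_r}\cong{\sh O}_{F_r}(1)$ and ${\sh O}_S(F_q)|_{F_r}\cong{\sh O}_{F_r}$ (disjoint fibres), so ${\sh O}_S(F_q+\ell E_\infty)|_{F_r}\cong{\sh O}_{F_r}(\ell)$ and $\dim|{\sh O}_{F_r}(\ell)|=\ell$ as well. It therefore suffices to show the restriction map $H^0(S,{\sh O}_S(F_q+\ell E_\infty))\to H^0(F_r,{\sh O}_{F_r}(\ell))$ is injective: an injective linear map between spaces of equal dimension is an isomorphism, and injectivity moreover says that no member of $|F_q+\ell E_\infty|$ contains $F_r$, so the induced map of projective spaces is everywhere defined and is an isomorphism. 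The kernel is $H^0(S,{\sh O}_S(F_q+\ell E_\infty-F_r))$, which I would show vanishes either via the same filtration --- it pushes down to an iterated extension of $\ell+1$ copies of ${\sh O}_E(q-r)$, and $q-r$ is a nonzero class of degree $0$ on a genus-one curve, so $h^0({\sh O}_E(q-r))=0$ --- or directly from Lemma \ref{lemma 2.1}: the line bundle has degree $0$ on $E_\infty$, so if it were effective it would be ${\sh O}_S(nE_\infty)$; intersecting with a fibre forces $n=\ell$ and hence $\rho^*{\sh O}_E(q-r)\cong{\sh O}_S$, contradicting $q\ne r$.

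I do not expect a serious obstacle here; the one place that needs attention is the cohomology of $\mathrm{Sym}^\ell{\sh F}_2$. One must resist shortcutting via $\mathrm{Sym}^\ell{\sh F}_2\cong{\sh F}_{\ell+1}$ and $h^0({\sh F}_{\ell+1})=1$, since by Lemma \ref{lemma 1.1} that identification fails once $\mathrm{char}(\kk)=p\le\ell$; routing the argument through the trivial-bundle filtration (equivalently, through Lemma \ref{lemma 2.1}) keeps it characteristic-free. Everything else is bookkeeping with Riemann--Roch and intersection numbers on the ruled surface.
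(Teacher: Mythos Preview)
Your argument is correct and essentially characteristic-free, but it is organized differently from the paper's. The paper computes $\chi$ via Riemann--Roch on the surface $S$ (together with Serre duality to kill $h^2$), then bounds $\dim|F_q+\ell E_\infty|$ from above by an induction on $\ell$: fixing a point $b\in E_\infty$ with $b\neq q$, any divisor through $b$ must split off $E_\infty$ (since ${\sh O}_S(F_q+\ell E_\infty)|_{E_\infty}\cong{\sh O}_{E_\infty}(q)$), so the sublinear system through $b$ is $E_\infty+|F_q+(\ell-1)E_\infty|$ and has codimension at most one. You instead push forward to $E$ and run Riemann--Roch there, replacing the induction by the trivial-bundle filtration of $\mathrm{Sym}^\ell{\sh F}_2$; this is a clean alternative and has the virtue of making the characteristic-independence manifest without invoking $\mathrm{Sym}^\ell{\sh F}_2\cong{\sh F}_{\ell+1}$. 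For the restriction statement the paper only says ``trivial dimension considerations,'' whereas you spell out the kernel computation $H^0({\sh O}_S(F_q-F_r+\ell E_\infty))=0$ via the filtration (or via Lemma \ref{lemma 2.1}); your treatment is more explicit here and is exactly what the paper's phrase is gesturing at. One cosmetic point: the identity $\rho_*{\sh O}_S(\ell E_\infty)\cong\mathrm{Sym}^\ell{\sh F}_2$ you cite is equation (\ref{2.2}) inside the proof of Proposition \ref{proposition 2.2}, not the proposition itself.
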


\begin{proof}
First, by the Riemann--Roch theorem for surfaces, we have
\begin{equation}\label{2.3}
\chi({\sh O}_S(F_q+\ell E_\infty)) = \chi({\sh O}_S) + \frac{(F_q + (\ell+2)E_\infty \cdot F_q + \ell E_\infty)}{2} = \ell + 1.
\end{equation}
Since $h^2({\sh O}_S(F_q+ \ell E_\infty)) = 0$ by Serre duality, it follows that $\dim |F_q+\ell E_\infty| \geq \ell$. We argue inductively on $\ell$ that equality occurs. The base case is trivial. Fix $b \in E_\infty(\kk)$. Any divisor in $|F_q+\ell E_\infty|$ containing $b$ must split off a copy of $E_\infty$. Hence $|F_q+\ell E_\infty|$ contains a subspace of codimension at most one consisting of divisors of the form $E_\infty + D'$, $D' \in |F_q + (\ell-1) E_\infty|$ and the inductive step is complete. Hence $\dim |F_q+\ell E_\infty| = \ell$ and also $h^1({\sh O}_S(F_q+ \ell E_\infty)) = 0$. The restriction $|F_q+\ell E_\infty| \to |{\sh O}_{F_r}(\ell)|$ is an isomorphism by trivial dimension considerations. \end{proof}

\subsection{Statement of the main conjecture and some remarks}

In this section, we state the main open problem and give some evidence supporting it. The problem is seemingly very simple: it simply asks how big can the multiplicity of a curve in $|F_q+\ell E_\infty|$ be at a general point $x$ in $S$. The fact, implicit in the definition below, that all algebraically closed fields of the same characteristic behave in the same way is a standard consequence of the Lefschetz principle.

Note that if $x \in S$ is a closed point not on $E_\infty$, then the dimension of the space of global sections of ${\sh O}_S(F_q+\ell E_\infty) \otimes {\sh I}_{x,S}^m$ only depends on the linear equivalence class of $\smash{ \rho(x)-q \in \mathrm{Div}^0(E) }$. This is due to the presence of the automorphisms discussed in subsection \S\S2.1. We will write ${\sh D} = {\sh O}_E(\rho(x)-q)$.
 
\begin{defn}\label{definition 2.4}
Let $\lambda=\lambda_{\mathrm{char}(\kk)}(m,E,{\sh D})$ be the minimal $\ell \geq 0$, for which there exists a curve $C \in |F_q+\ell E_\infty|$ of multiplicity at least $m$ at $x$. Let
$$ \lambda_{\mathrm{char}(\kk)}(m,E) = \max_{{\sh D} \in \mathrm{Pic}^0(E)} \lambda_{\mathrm{char}(\kk)}(m,E,{\sh D})  $$
and
$$ \lambda^{\mathrm{gen}}_{\mathrm{char}(\kk)}(m) = \max_{E} \lambda_{\mathrm{char}(\kk)}(m,E).  $$
Similarly, for fixed $\ell \geq 1$, let $\smash{ \mu_{\mathrm{char}(\kk)}(\ell,E,{\sh D}) }$, $\smash {\mu_{\mathrm{char}(\kk)}(\ell,E) }$ and $\smash{ \mu^\mathrm{gen}_{\mathrm{char}(\kk)}(\ell) }$ be the maximal multiplicity of a curve $C \in |F_q+\ell E_\infty|$ at a given point, with the analogous logical quantifiers as in the case of $\lambda$ and min instead of max.
\end{defn}

Occasionally, we will ambiguously write $\lambda_{\mathrm{char}(\kk)}(m)$ instead of $\lambda_{\mathrm{char}(\kk)}(m,E)$. In these cases, it is implicitly understood that the dependence on $E$ is unimportant.

\begin{conj}\label{main conjecture}
If $\mathrm{char}(\kk) = 0$, or $m \leq p = \mathrm{char}(\kk)$, then
\begin{equation}\label{3.4}
\lambda^\mathrm{gen}_{\mathrm{char}(\kk)}(m) = {m+1 \choose 2}
\end{equation}
and if $m > p = \mathrm{char}(\kk)$, then
\begin{equation}\label{3.55}
\lambda^\mathrm{gen}_{\mathrm{char}(\kk)}(m) = {p+1 \choose 2} + p(m-p).
\end{equation}
\end{conj}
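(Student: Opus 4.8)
The plan is to establish the two inequalities bounding $\lambda^\mathrm{gen}_{\mathrm{char}(\kk)}(m)$ separately, the upper bounds unconditionally and the lower bounds being where all the content sits. For the ``$\leq$'' part of (\ref{3.4}), fix any $E$, any class ${\sh D}$, and the associated point $x \notin E_\infty$: by Proposition \ref{proposition 2.3} we have $\dim |F_q + \binom{m+1}{2}E_\infty| = \binom{m+1}{2}$, while imposing multiplicity $\geq m$ at $x$ cuts the dimension by at most $\binom{m+1}{2}$, so that system still contains a curve of multiplicity $\geq m$ at $x$, whence $\lambda_{\mathrm{char}(\kk)}(m,E,{\sh D}) \leq \binom{m+1}{2}$ for every $E$ and ${\sh D}$. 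For the ``$\leq$'' part of (\ref{3.55}) set $s = m-p$: by Proposition \ref{proposition 2.2} the pencil $|pE_\infty|$ has a one-parameter family of smooth genus one members sweeping out $S$, so some such $C$ passes through $x$; choosing in addition $D \in |F_q + \binom{p+1}{2}E_\infty|$ of multiplicity $\geq p$ at $x$ by the same count, the curve $sC+D \in |F_q+(p(m-p)+\binom{p+1}{2})E_\infty|$ has multiplicity $\geq s+p = m$ at $x$, again for every $E$ and ${\sh D}$.

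For the lower bounds I would first reduce the case $m>p$ of (\ref{3.55}) to multiplicities $\leq p$ via a Bézout argument against these elliptic curves. Suppose $C' \in |F_q+\ell E_\infty|$ has multiplicity $\geq m > p$ at $x$ and let $C \in |pE_\infty|$ be smooth through $x$. From $(E_\infty)^2 = 0$ and $E_\infty \cdot F_q = 1$ we get $(pE_\infty)\cdot(F_q+\ell E_\infty) = p$, so if $C$ were not a component of $C'$ then $p = C\cdot C' \geq \mathrm{mult}_x(C)\,\mathrm{mult}_x(C') \geq m$, impossible; hence $C$ occurs in $C'$ with some multiplicity $j$, and $\Gamma := C'-jC$ is effective, lies in $|F_q+(\ell-jp)E_\infty|$, does not contain $C$, and has multiplicity $\geq m-j$ at $x$, so $p = C\cdot\Gamma \geq m-j$, i.e.\ $j \geq s$. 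Granting (\ref{3.4}) for the multiplicity $m-j \leq p$ at a general point, $\ell-jp \geq \binom{m-j+1}{2}$, and the elementary inequality $jp+\binom{m-j+1}{2} \geq (m-p)p+\binom{p+1}{2}$ for $j \geq s$ (equality at $j=s$) gives $\ell \geq p(m-p)+\binom{p+1}{2}$. So the whole conjecture reduces to the lower bounds in (\ref{3.4}) for $m \leq p$, together with those for all $m$ in characteristic zero.

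That remaining statement --- for general $E$ and general ${\sh D}$, the $\binom{m+1}{2}$ conditions imposed by a multiplicity-$m$ point on $|F_q+(\binom{m+1}{2}-1)E_\infty|$ are independent --- is the Harbourne--Hirschowitz-type core of the problem, and is where I expect the decisive obstacle to be; it is precisely what keeps the conjecture open. Since dependence is a closed condition on $(E,{\sh D})$, it suffices to verify independence at one well-chosen model. In characteristic zero my plan would be to pick a prime $p$ large relative to $m$ --- so that the curves of $|pE_\infty|$ responsible for speciality when $m>p$ are harmless and may even be exploited by degenerating the fat point onto one of them --- prove the vanishing there, and transport it to characteristic $0$ by upper-semicontinuity of $h^0$ over a mixed-characteristic base. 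This should go through for $m \leq 3$ with $p=3$, as reflected in Theorem \ref{clear main theorem}, but I see no route to $m \geq 4$; for $3 < m \leq p$ in characteristic $p$ the same independence must be proved directly, and only small multiplicities look accessible. The positive-characteristic failure recorded in Theorem--Example \ref{example theorem} shows that no proof can be purely numerical --- it must genuinely use the characteristic.
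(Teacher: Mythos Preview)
The statement you are attempting to prove is labelled a \emph{Conjecture} in the paper, and the paper does not prove it; the characteristic-zero lower bound in (\ref{3.4}) is explicitly left open. Your proposal correctly recognises this: you prove the upper bounds unconditionally, reduce the lower bound in (\ref{3.55}) to that in (\ref{3.4}), and then candidly state that the remaining lower bound ``is precisely what keeps the conjecture open.'' That is an accurate assessment, and your partial progress lines up with the paper's.

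On the details: your upper bounds coincide with what the paper records as trivial (just after the conjecture) together with the observation, implicit in Proposition~\ref{proposition 2.2}, that $\lambda_p(m)-\lambda_p(m-1)\le p$. Your B\'ezout reduction of (\ref{3.55}) to (\ref{3.4}) is correct and in fact a bit more than the paper writes down: the paper only records the single inductive step $\lambda_p(p,E,{\sh D}) \ge p + \lambda_p(p-1,E,{\sh D})$ in Proposition~\ref{proposition 2.7}, argued by restricting to the smooth curve $C\in|pE_\infty|$ rather than by intersection numbers, which is why that version carries the extra hypothesis that ${\sh D}$ is not $p$-torsion. Your argument handles all $m>p$ in one stroke and needs no such hypothesis, since for $m>p$ the local intersection multiplicity at $x$ already strictly exceeds the global number $C\cdot C'=p$. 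Your proposed attack on the remaining open lower bound --- verify it over a well-chosen prime and lift by semicontinuity over a mixed-characteristic base --- is exactly what the paper carries out for $m\le 3$ in Corollary~\ref{ugly}, and the paper shares your pessimism about pushing this to $m\ge 4$. In short, there is no gap beyond the one you have already named, and that gap is the conjecture itself.
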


The value conjectured in (\ref{3.4}) is trivially an upper bound, since $\dim |F_q+\ell E_\infty| = \ell$. In characteristic $0$, where divisors in $|F_q+\ell E_\infty|$ are reduced and irreducible as long as they don't contain $E_\infty$, we also have a trivial lower bound
\begin{equation}\label{3.5}
\lambda_0(m) \geq {m \choose 2} + 1,
\end{equation}
because $p_a(|F_q+\ell E_\infty|) = \ell$ from the genus-degree formula. In fact, there is a stronger lower bound which can be established quite easily.

\begin{prop}\label{proposition 3.6}
For all $m \geq 1$,
$$ \lambda_0(m) \geq \frac{m^2}{2}. $$
\end{prop}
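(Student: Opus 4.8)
The plan is to sharpen \eqref{3.5} by a Bézout argument comparing two irreducible curves through the same point $x$ but lying in linear systems over two \emph{different} fibers. The key observation is that two distinct irreducible curves of classes $F_{q_1}+\ell_1E_\infty$ and $F_{q_2}+\ell_2E_\infty$ with $q_1\ne q_2$ have distinct classes in $\mathrm{Pic}(S)$, hence no common component, so their intersection number on $S$ bounds $\ell_1+\ell_2$ from below in terms of the multiplicities at $x$.

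Carrying this out: fix a closed point $x\in S\setminus E_\infty$ and two distinct points $q_1,q_2\in E(\kk)$, and put ${\sh D}_i={\sh O}_E(\rho(x)-q_i)$. Using the fibrewise automorphisms recorded before Definition~\ref{definition 2.4} together with the translation automorphisms of $S$ (these exist because the Atiyah bundle is translation invariant, so $\tau_a^*{\sh V}\cong{\sh V}$ and $E_\infty$ is preserved), the least $\ell$ for which $|F_{q_i}+\ell E_\infty|$ contains a curve of multiplicity $\ge m$ at $x$ is exactly $\lambda_0(m,E,{\sh D}_i)$. Choose such a curve $C_i$ of this minimal degree $\ell_i:=\lambda_0(m,E,{\sh D}_i)$. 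By minimality $C_i$ does not contain $E_\infty$ — otherwise $C_i-E_\infty$ would still have multiplicity $\ge m$ at $x$, as $x\notin E_\infty$, contradicting minimality of $\ell_i$ — so $C_i$ is reduced and irreducible in characteristic $0$ by the fact quoted just before \eqref{3.5}. Since $q_1\ne q_2$, the classes $F_{q_1}+\ell_1E_\infty$ and $F_{q_2}+\ell_2E_\infty$ are distinct in $\mathrm{Pic}(S)=\rho^*\mathrm{Pic}(E)\oplus\zz E_\infty$, so $C_1\ne C_2$, and being irreducible they share no component. Using $F_{q_i}^2=E_\infty^2=0$ and $F_{q_i}\cdot E_\infty=1$ one computes $C_1\cdot C_2=\ell_1+\ell_2$, whereas $C_1\cdot C_2\ge I_x(C_1,C_2)\ge(\mathrm{mult}_xC_1)(\mathrm{mult}_xC_2)\ge m^2$. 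Hence $\lambda_0(m,E,{\sh D}_1)+\lambda_0(m,E,{\sh D}_2)\ge m^2$ whenever ${\sh D}_1\ne{\sh D}_2$. As $\mathrm{Pic}^0(E)$ is infinite, take ${\sh D}_1$ achieving $\max_{{\sh D}}\lambda_0(m,E,{\sh D})=\lambda_0(m,E)$ and any ${\sh D}_2\ne{\sh D}_1$; then $2\lambda_0(m,E)\ge m^2$, so $\lambda_0(m)\ge\lambda_0(m,E)\ge m^2/2$.

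The step that takes the most thought — and the reason for spreading the two curves over different fibers — is precisely the no-common-component condition. Inside a single system $|F_q+\ell E_\infty|$ there need not be two distinct curves with an $m$-fold point at $x$ (the space of such curves can be a single point when $\ell$ is minimal), so a direct Bézout estimate there would be circular; distributing the two curves over $F_{q_1}\ne F_{q_2}$ forces them to be non-proportional, while the minimal-degree choice keeps each irreducible. Everything else — identifying the minimal degree for $|F_{q_i}+\ell E_\infty|$ with $\lambda_0(m,E,{\sh D}_i)$ via the translation automorphisms, and the elementary intersection computation $C_1\cdot C_2=\ell_1+\ell_2$ — is routine.
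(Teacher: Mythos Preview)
Your argument is correct and is essentially the same as the paper's: both prove the bound via a B\'ezout computation between two irreducible curves through the same point $x$ lying in linear systems $|F_{q_1}+\ell_1E_\infty|$ and $|F_{q_2}+\ell_2E_\infty|$ with $q_1\ne q_2$, using the automorphisms of $S$ over $\mathrm{Pic}^0(E)$ to move between base points. The only cosmetic differences are that the paper phrases the setup dually (fixing $q$ and taking two general points $x,x'$, then applying $\varphi\in\mathrm{Aut}^+(S)$ to bring $x'$ to $x$, which lands the second curve in $|F_{q'}+\lambda_0(m)E_\infty|$), and that it computes the intersection on the blowup of $S$ at $x$ rather than via the local inequality $I_x(C_1,C_2)\ge m^2$; both routes give $2\lambda_0(m)\ge m^2$.
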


\begin{proof} The curve $C_m \in |F_q+\lambda_0(m) E_\infty|$ of multiplicity at least $m$ at some fixed general $x \in S$ is unique and irreducible, by the assumption that $\lambda_0(m)$ is minimal. Indeed, if $C_m$ wasn't unique, then we would have (at least) a pencil of such curves. Then, the corresponding line in $|F_q + \ell E_\infty|$ would have to intersect the hyperplane of divisors which contain $E_\infty$, so removing the $E_\infty$ component from this divisor gives a curve with multiplicity at least $m$ and smaller $\ell$, which is a contradiction. Moreover, (\ref{3.5}) shows that the multiplicity of $C_m$ at $x$ is exactly $m$. 

Instead of just one, choose two general points $x,x' \in S(\kk)$. Let $C_m,C'_m \in |F_q+\lambda_0(m) E_\infty|$ be the curves of multiplicity $m$ at $x$ and $x'$ respectively. Let $\varphi \in \mathrm{Aut}^+(S)$ be the unique automorphism which takes $x'$ to $x$ and let 
$$ D_m = \varphi(C'_m) \in |F_{q'}+\lambda_0(m) E_\infty|. $$ 
Of course, $q \neq q'$, so $C_m \neq D_m$. Let $\smash{ \tilde{S} }$ be the blowup of $S$ at $x$ and let $\smash{ \tilde{C}_m}$, $\smash{\tilde{D}_m }$ be the proper transforms of $C_m$, $D_m$. Then, 
$$ 0 \leq (\tilde{C}_m \cdot \tilde{D}_m)_{\tilde{S}} = ( F_{q}+\lambda_0(m) E_\infty - m Y \cdot F_{q'}+\lambda_0(m) E_\infty - m Y ) = 2 \lambda_0(m) - m^2, $$
where $Y \subset \smash{ \tilde{S} }$ denotes the exceptional divisor, completing the proof. \end{proof}

The final goal of this section is to illustrate the interaction between the characteristic $0$ case and the positive characteristic case, by proving that $\lambda^{\mathrm{gen}}_0(3) = 6$, as predicted by Conjecture \ref{main conjecture}, using a positive characteristic method. The conjecture in characteristic $p$ is equivalent to
\begin{equation}
\begin{aligned}
\lambda^\mathrm{gen}_p(m)-\lambda^\mathrm{gen}_p(m-1)=
\begin{cases}
m & \text{ if } m \leq p, \\
p & \text{ if } m \geq p.
\end{cases}
\end{aligned}
\end{equation}
By Proposition \ref{proposition 2.2}, the difference on the left never exceeds $p$. The main conceptual evidence in favor of Conjecture \ref{main conjecture} is the following observation.

\begin{prop}\label{proposition 2.7}
For any prime number $p$ and smooth elliptic curve $E$ over an algebraically closed field $\kk$ of characteristic $p$, we have 
\begin{equation}\label{pinduction}
\lambda_p(p,E,{\sh D}) \geq  p + \lambda_p(p-1,E,{\sh D})
\end{equation}
if ${\sh D}$ is not $p$-torsion. In particular,
$$ \lambda_p(p,E) \geq  p + \lambda_p(p-1,E) $$
and
$$ \lambda^\mathrm{gen}_p(p) \geq  p + \lambda^\mathrm{gen}_p(p-1). $$
\end{prop}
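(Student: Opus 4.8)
The plan is to leverage the smooth genus one curves contained in $|pE_\infty|$, whose existence (Proposition \ref{proposition 2.2}) is the very reason the characteristic-$p$ behaviour differs from characteristic zero. Fix a closed point $x\in S\setminus E_\infty$ with ${\sh O}_E(\rho(x)-q)\cong{\sh D}$, set $\ell:=\lambda_p(p,E,{\sh D})$, and choose a curve $C\in|F_q+\ell E_\infty|$ with $\mathrm{mult}_x(C)\geq p$. Since $pE_\infty$ is supported on $E_\infty$ it does not pass through $x$, so $x$ is not a base point of the pencil $|pE_\infty|$ and some member $G$ passes through $x$; as $G\neq pE_\infty$, Proposition \ref{proposition 2.2} tells us $G$ is a smooth genus one curve, and it is disjoint from $E_\infty$ because $G\cdot E_\infty=p\,E_\infty^2=0$. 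The crux is to show that $G$ is a component of $C$. Granting this, $C-G$ is an effective divisor in the class $F_q+(\ell-p)E_\infty$ — in particular $\ell\geq p$, since $\rho_*{\sh O}_S(kE_\infty)=0$ for $k<0$ — and, multiplicity at a point being additive on effective divisors with $\mathrm{mult}_x(G)=1$, we obtain $\mathrm{mult}_x(C-G)\geq p-1$. Hence $\lambda_p(p-1,E,{\sh D})\leq\ell-p$, which is exactly \eqref{pinduction}.

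To prove $G\subset\mathrm{Supp}(C)$, suppose not. Then $C|_G$ is a well-defined effective divisor on $G$, of degree $C\cdot G=(F_q+\ell E_\infty)\cdot pE_\infty=p$ (using $E_\infty^2=0$ and $F_q\cdot E_\infty=1$), while its multiplicity at $x$ is at least $\mathrm{mult}_x(C)\cdot\mathrm{mult}_x(G)\geq p$; comparing with the degree forces equality throughout, so $C|_G=p[x]$ and ${\sh O}_S(C)|_G\cong{\sh O}_G(p[x])$. On the other hand, the section of ${\sh O}_S(E_\infty)$ with divisor $E_\infty$ (unique up to scalar, as $h^0({\sh O}_S(E_\infty))=1$) restricts to a nowhere-vanishing section on $G$, so ${\sh O}_S(E_\infty)|_G$ is trivial and ${\sh O}_S(C)|_G\cong{\sh O}_S(F_q)|_G\cong f^*{\sh O}_E(q)$, where $f:=\rho|_G\colon G\to E$ is finite of degree $p$ (indeed $G\cdot F_q=pE_\infty\cdot F_q=p$). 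Thus $f^*{\sh O}_E(q)\cong{\sh O}_G(p[x])$, and applying the norm homomorphism $f_*\colon\mathrm{Pic}(G)\to\mathrm{Pic}(E)$ together with the projection formula $f_*f^*{\sh M}\cong{\sh M}^{\otimes p}$ and $f_*[x]=[\rho(x)]$ gives ${\sh O}_E(q)^{\otimes p}\cong{\sh O}_E(p\,\rho(x))$, i.e.\ ${\sh D}^{\otimes p}\cong{\sh O}_E$. This contradicts the hypothesis that ${\sh D}$ is not $p$-torsion, so $G$ is a component of $C$.

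I expect this last step to be the only genuine obstacle: recognizing that the unique $|pE_\infty|$-curve through $x$ splits off any curve of multiplicity $p$ at $x$, and identifying the failure of ${\sh D}$ to be $p$-torsion as precisely what excludes the competing possibility (that $C$ meets $G$ only at $x$, with local intersection multiplicity $p$). The two "in particular" statements are then routine. By semicontinuity of cohomology, for every $c$ the loci $\{{\sh D}\in\mathrm{Pic}^0(E):\lambda_p(m,E,{\sh D})\geq c\}$ and $\{E\in\mathfrak{M}^{sm}_1(\kk):\lambda_p(m,E)\geq c\}$ are open; since these invariants are uniformly bounded (e.g.\ by $\binom{p+1}{2}$, from counting conditions in $|F_q+NE_\infty|$), the maxima defining $\lambda_p(p-1,E)$ and $\lambda^{\mathrm{gen}}_p(p-1)$ are attained on dense open sets. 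One may therefore choose a non-$p$-torsion ${\sh D}_0$ with $\lambda_p(p-1,E,{\sh D}_0)=\lambda_p(p-1,E)$, respectively a curve $E_0$ with $\lambda_p(p-1,E_0)=\lambda^{\mathrm{gen}}_p(p-1)$, and feed these into the inequality just proved.
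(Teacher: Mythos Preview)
Your argument is correct and follows the same strategy as the paper's proof: pick the smooth member of $|pE_\infty|$ through $x$, restrict the defining section of the multiplicity-$p$ curve to it, and use that a degree-$p$ line bundle on a curve cannot have a section vanishing to order $p$ at $x$ unless it is ${\sh O}(px)$, which is ruled out by the hypothesis on ${\sh D}$. The paper is terser---it simply asserts ${\sh O}_C(px)\neq{\sh O}_S(F_q)|_C$ ``by the assumption on ${\sh D}$'' and that ${\sh O}_S(D)|_C={\sh O}_S(F_q)|_C$---whereas you spell out both steps, the first via the norm map and the second via $G\cap E_\infty=\varnothing$; you also justify the two ``in particular'' consequences by a semicontinuity/openness argument that the paper omits entirely.
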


\begin{proof} By Proposition \ref{proposition 2.2}, there exists a smooth curve $C \in |pE_\infty|$. Note that ${\sh O}_C(px) \neq {\sh O}_S(F_q)|_C$ by the assumption on ${\sh D}$. Choose $D \in |F_q+ \lambda_p(p) E_\infty|$ of multiplicity at least $p$ at $x$. The section $1 \in \mathrm{H}^0({\sh O}_S(D))$ restricted to $C$ has a zero of order at least $p$ at $x$. However, ${\sh O}_S(D)|_C = {\sh O}_S(F_q)|_C$, so we obtain a section of the latter degree $p$ line bundle with a zero of order $p$ at $x$. Given the choice of $x$, this section must vanish identically, hence $C \subset D$. Let $D = C + D'$. Then $D' \in |F_q+ (\lambda_p(p)-p) E_\infty|$ has multiplicity at least $p-1$ at $x$ and (\ref{pinduction}) follows. \end{proof}

\begin{cor}\label{ugly} $\lambda_0(3,y^2=x^3-x+1) = 6$. In particular, $\lambda_0^\mathrm{gen}(3) = 6$.
\end{cor}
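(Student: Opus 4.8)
The plan is to prove the nontrivial inequality $\lambda_0(3,E_0)\ge 6$; the opposite one is automatic, since $\dim|F_q+\ell E_\infty|=\ell$ forces $\lambda_0(m,E,{\sh D})\le\binom{m+1}{2}$ for all $E,{\sh D}$, whence $6\ge\lambda_0^{\mathrm{gen}}(3)\ge\lambda_0(3,E_0)$, with equality throughout once $\lambda_0(3,E_0)\ge 6$ is known. As $\lambda_0(3,E_0)$ is by definition a maximum over ${\sh D}\in\mathrm{Pic}^0(E_0)$, it suffices to exhibit a single class ${\sh D}_0$ with $\lambda_0(3,E_0,{\sh D}_0)\ge 6$, and I would produce it by degenerating to characteristic $3$.

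\emph{Setting up the degeneration.} The curve $E_0\colon y^2=x^3-x+1$ has discriminant prime to $3$, hence good reduction, so over $R=W(\overline{\mathbb F}_3)$ there is a relative elliptic curve $\mathcal E\to\mathrm{Spec}\,R$ carrying a relative Atiyah bundle (take the extension class to be a generator of the invertible $R$-module $R^1\pi_*{\sh O}_{\mathcal E}$), hence a smooth family ${\sh S}\to\mathrm{Spec}\,R$ of Atiyah ruled surfaces with distinguished section ${\sh E}_\infty$ and a fibre ${\sh F}_q$, whose special fibre is the Atiyah surface $S_3$ of $E_3$. I would pick a point $x_3\in S_3$ off ${\sh E}_\infty$, lying over a non-$3$-torsion class ${\sh D}_3\in\mathrm{Pic}^0(E_3)$ and general in its fibre (the genericity needed is pinned down in the last paragraph), and lift it to a section $\sigma$ of ${\sh S}\to\mathrm{Spec}\,R$, which is possible since $R$ is Henselian and ${\sh S}$ is smooth. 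Locally at $\sigma$ the total space is $\mathrm{Spec}\,R[[u,v]]$ with ${\sh I}_\sigma=(u,v)$, so ${\sh O}_{\sh S}/{\sh I}_\sigma^3$ is $R$-free and ${\sh O}_{\sh S}({\sh F}_q+\ell{\sh E}_\infty)\otimes{\sh I}_\sigma^3$ is flat over $R$ with formation commuting with base change; upper semicontinuity of $h^0$ then gives, for every $\ell$, that $h^0$ on the generic fibre is $\le h^0$ on the special fibre, hence $\lambda_0(3,E_0,{\sh D}_0)\ge\lambda_3(3,E_3,{\sh D}_3)$, where ${\sh D}_0$ is the class of $\rho(x_0)-q$ on the generic fibre.

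\emph{Reducing to one double point.} By Proposition \ref{proposition 2.7} with $p=3$ (applicable since ${\sh D}_3$ is not $3$-torsion), $\lambda_3(3,E_3,{\sh D}_3)\ge 3+\lambda_3(2,E_3,{\sh D}_3)$; combined with $\lambda_3(3,E_3,{\sh D}_3)\le 6$ it suffices to prove $\lambda_3(2,E_3,{\sh D}_3)\ge 3$, i.e.\ $h^0({\sh O}_S(F_q+2E_\infty)\otimes{\sh I}_x^2)=0$ for $x$ general in its fibre. Since $\rho(x)\ne q$, no nonzero section of ${\sh O}_S(F_q+2E_\infty)$ vanishes on $F:=F_{\rho(x)}$ (it would give a section of ${\sh O}_S(F_q-F_{\rho(x)}+2E_\infty)$, whose pushforward $\mathcal O_E(q-\rho(x))\otimes{\sh F}_3$ has none), so restriction to $F$ is an isomorphism onto $H^0(F,{\sh O}_F(2))$ by Proposition \ref{proposition 2.3}. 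A section vanishing to order $\ge2$ at $x$ in $S$ restricts on $F$ to the square of a coordinate at $x$, hence is a scalar multiple of the equation of the unique $C_x\in|F_q+2E_\infty|$ with $C_x\cap F=2x$; therefore $h^0({\sh O}_S(F_q+2E_\infty)\otimes{\sh I}_x^2)=0$ precisely when $C_x$ is not singular at $x$, and everything comes down to proving this for general $x$.

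\emph{The heart of the matter.} This last step is the one I expect to be the real obstacle; I would argue by contradiction. If $C_x$ had multiplicity $\ge2$ at a general $x$, a short analysis with Lemma \ref{lemma 2.1} and Proposition \ref{proposition 2.2} (ruling out a component equal to $E_\infty$, a fibral component, and reducible or non-reduced configurations) would show $C_x$ is an irreducible bisection of $\rho$ with $p_a(C_x)=2$, forcing $p_g(C_x)=1$ and a single singular point, at $x$. Its normalization $\widetilde C_x\to E$ is then a degree-$2$ cover, separable since $2$ is prime to $3$, hence unramified by Riemann--Hurwitz, i.e.\ an étale isogeny; so $C_x\to E$ has no honest ramification away from $x$, and since any $y\ne x$ is a smooth point of $C_x$, $C_y=C_x$ forces $y=x$. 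Thus the rational map $S\dashrightarrow|F_q+2E_\infty|\cong\mathbb P^2$, $x\mapsto[C_x]$, is generically injective; it is dominant because the discriminant conics $\mathcal K_r=\{[C]:C|_{F_r}\ \text{non-reduced}\}$ are not all equal (a curve on every $\mathcal K_r$ would be a degree-$2$ cover of $E$ ramified over every fibre of $\rho$, impossible in characteristic $\ne2$). A dominant, generically injective rational map of surfaces makes $S$ unirational—in characteristic $3$ one passes to a Frobenius power to obtain $K(E)^{3^n}\hookrightarrow K(\mathbb P^2)$, so the genus-one curve $E^{(3^n)}$ is dominated by $\mathbb P^2$, hence rational—which contradicts the surjection $\rho\colon S\to E$ onto a curve of genus one. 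Hence $C_x$ is not singular at $x$ on a dense open set; choosing $x_3$ there with ${\sh D}_3$ non-$3$-torsion (compatible, since the bad locus has only finitely many fibral components) and chaining the three inequalities gives $\lambda_0(3,E_0,{\sh D}_0)\ge6$, hence $\lambda_0(3,E_0)=6$ and $\lambda_0^{\mathrm{gen}}(3)=6$.
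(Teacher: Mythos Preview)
Your proof follows the paper's overall route: construct the relative Atiyah surface over a mixed-characteristic DVR with residue characteristic $3$, invoke semicontinuity, and apply Proposition~\ref{proposition 2.7} to reduce to $\lambda_3(2,E_3,{\sh D}_3)\ge 3$ for some non-$3$-torsion ${\sh D}_3$. The genuine difference lies in this last step. The paper works over ${\mathbb Z}_{(46)}$ with the sections $q,q'$ chosen among the integer points, then observes that the branch locus of an irreducible $C\in|F_q+2E_\infty|$ (viewed as a double cover of $E$) lies in a fixed class of $\mathrm{Pic}^2(E)$; hence the ``bad'' classes ${\sh D}$ differ pairwise by $2$-torsion, and since $E({\mathbb F}_3)\cong{\mathbb Z}/7$ has odd order, at most one of the ${\mathbb F}_3$-rational candidates can be bad, so a good pair among the three integer points exists. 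You instead argue by contradiction: if $C_x$ were singular at a general $x$, then $\widetilde{C}_x\to E$ would be an \'etale double cover, making $x\mapsto[C_x]$ generically injective, hence a dominant purely inseparable map $S\dashrightarrow{\mathbb P}^2$, which embeds a Frobenius power of $K(E)$ into $K({\mathbb P}^2)$---impossible since ${\mathbb P}^2$ admits no dominant map to a positive-genus curve. This is correct and more robust (it works for \emph{any} $E$ in characteristic $3$, independent of the arithmetic of $E({\mathbb F}_3)$), at the cost of a longer and more delicate argument. Two minor points: the phrase ``general in its fibre'' is misleading, since by the ${\mathbf G}_a$-action the genericity you actually need is on the \emph{fibre} (equivalently on ${\sh D}_3$), not on the position within it; and your discriminant-conic aside for dominance is both unnecessary (generic injectivity between equidimensional varieties over an algebraically closed field already forces dominance) and incomplete as stated (that the $\mathcal{K}_r$ are not all equal does not by itself show their union is $2$-dimensional).
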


\begin{proof}
The discriminant of $y^2=x^3-x+1$ is $-368 = -2^4 \cdot 23$. Let $E$ be $y^2=x^3-x+1$ regarded as an elliptic curve over $\mathrm{Spec}(R)$, where
$$ R={\zz}_{46} = \left\{ \frac{n}{46^k}: n,k \in \zz \right\}. $$
The group of ${\ff}_3$-points of $E$ is isomorphic to ${\zz}/7$, cf. \cite[Example 1, \S5.3]{[Poo01]}.

The proof amounts to little more than rewriting everything over $\mathrm{Spec}(R)$. We will take the freedom to reuse analogous notation. Note that $\mathrm{Ext}^1({\sh O}_E,{\sh O}_E) \cong R$ and consider the extension
$$ 0 \longrightarrow {\sh O}_E \longrightarrow {\sh V} \longrightarrow {\sh O}_E \longrightarrow 0 $$
corresponding to $1 \in \mathrm{Ext}^1({\sh O}_E,{\sh O}_E)$. Again, define $S = \mathrm{Proj}_E\mathrm{Sym }({\sh V}^\vee)$ and denote the section of $\rho:S \to E$ induced by ${\sh O}_E \subset {\sh V}$ by $E_\infty \subset S$. As before, we have $ \rho_* {\sh O}_S(\ell E_\infty) \cong \mathrm{Sym}^\ell {\sh V}$. 

Let $q,q':\mathrm{Spec}(R) \to E$ be two sections of $E$ corresponding to two of the three integer solutions $(-1,1)$, $(0,1)$, $(1,1)$. Clearly, there exists a section $\xi:\mathrm{Spec}(R) \to S$ of the projection map $\sigma:S \to \mathrm{Spec}(R)$ whose image is disjoint from $E_\infty$ and such that $\rho \circ \xi = q'$.

The coherent sheaf $\smash{ {\sh F} = {\sh O}_S(\ell E_\infty + F_q) \otimes {\sh I}_{\xi(E),S}^m }$ on $S$ restricts on each fiber $S_{\mathfrak p} = \sigma^{-1}({\mathfrak p})$ to a sheaf ${\sh F}_{\mathfrak p}$, which we pull back to $S_{\overline{\kappa}_{\mathfrak p}}=\mathrm{Spec}\overline{\kappa}_{\mathfrak p} \times_{\mathrm{Spec}{\kappa_{\mathfrak p}}} S_{\mathfrak p}$. The resulting sheaf is denoted by ${\sh F}_{\overline{\kappa}_{\mathfrak p}}$. It is not hard to see that 
$$ {\mathrm H}^0(S_{\overline{\kappa}_{\mathfrak p}},{\sh F}_{\overline{\kappa}_{\mathfrak p}}) \cong \overline{\kappa}_{\mathfrak p} \otimes_{\kappa_{\mathfrak p}} {\mathrm H}^0(S_{\mathfrak p},{\sh F}_{\mathfrak p}) $$ 
and hence by the semicontinuity part of the cohomology and base change theorem,
\begin{equation}
\begin{aligned}
\dim_{\overline{\ff}_3} {\mathrm H}^0(S_{\overline{\ff}_3},{\sh F}_{\overline{\ff}_3}) &= \dim_{\ff_3} {\mathrm H}^0(S_{(3)},{\sh F}_{(3)})  \\
& \geq \dim_{\qqq} {\mathrm H}^0(S_{(0)},{\sh F}_{(0)}) = \dim_{\overline{\qqq}} {\mathrm H}^0(S_{\overline{\qqq}},{\sh F}_{\overline{\qqq}}).
\end{aligned}
\end{equation}
Of course, ${\sh F}_{\overline{\kappa}_{\mathfrak p}}$ is the sheaf we would have previously (that is, anywhere outside of this proof) called ${\sh L}_\ell \otimes {\sh I}_{x,S}^m$, so we've shown that
$$ \lambda_0(3,E_{\overline{\qqq}},{\sh D}_{(0)}) \geq \lambda_3(3,E_{\overline{\ff}_3},{\sh D}_{(3)}), $$
where ${\sh D}_{\mathfrak p}$ is the linear equivalence class of $q'({\mathfrak p})-q({\mathfrak p})$.

It is clear that $\smash{ \lambda_0(3,E_{\overline{\qqq}}) \geq \lambda_0(3,E_{\overline{\qqq}},{\sh D}_{(0)}) }$, so it suffices to prove that 
$$ \lambda_3(3,E_{\overline{\ff}_3},{\sh D}_{(3)}) \geq 6. $$
Note that ${\sh D}_{(3)}$ is not $3$-torsion because it is nonzero and $E({\mathbb F}_3) \cong {\mathbb Z}/7$, so by Proposition \ref{proposition 2.7}, it suffices to prove that $\smash{\lambda_3(2,E_{\overline{\ff}_3},{\sh D}_{(3)}) = 3}$. 

We will provide an improvised argument, avoiding getting into too much detail. The curves in $|F_q+2E_\infty|$ are reduced and irreducible, provided they don't contain a copy of $E_\infty$. To each such curve $C$ we associate the branch divisor of the map $C \to E$. Recall that $C$ is a genus two curve due to the remark after formula (\ref{3.5}). Because $|F_q+2E_\infty|$ is parametrized by a projective plane, while $\smash{ \mathrm{Pic}^2(E) }$ is a genus one curve, the linear equivalence class of the branch divisor must be constant. However, when (irreducible) curves in $|F_q+2E_\infty|$ acquire double points, the two ramification points of $C \to E$ come together at the singularity. Hence all the divisors classes ${\sh D}_{(3)}$ for which $\smash{ \lambda_3(2,E_{\overline{\ff}_3},{\sh D}_{(3)}) < 3 }$ differ from each other by $2$-torsion classes. However, since $E({\mathbb F}_3) \cong {\mathbb Z}/7$ for our specific $E$, there can be \emph{at most one} ${\sh D}_{(3)}$ for which $\smash{ \lambda_3(2,E_{\overline{\ff}_3},{\sh D}_{(3)}) < 3 }$. In conclusion, it must be possible to choose $q$ and $q'$ to be two of the integer solutions $(-1,1)$, $(0,1)$, $(1,1)$ such that $\smash{ \lambda_3(2,E_{\overline{\ff}_3},{\sh D}_{(3)}) = 3 }$, completing the proof. 

Note that the argument in the last paragraph also shows that $\lambda_0^\mathrm{gen}(2) = 3$. \end{proof}

\begin{rmk}
Assume that $\mathrm{char}(\kk) = 0$ and that Conjecture \ref{main conjecture} is true. If $\ell = m(m+1)/2-1$, a trivial calculation for the expected degeneracy of
$$ \mathrm{H}^0({\sh O}_S(F_q+\ell E_\infty)) \otimes {\sh O}_S \longrightarrow {\mathrm J}^m{\sh O}_S(F_q+\ell E_\infty) $$
suggests that curves in $|F_q+\ell E_\infty|$ of multiplicity $m$ at $x$ should exist if $x \in E_\infty$ or if $x$ lives in one of $m(m+1)/2$ fibers of $S$. For $m = \ell =2$, these are the fibers over the three points of $E$ which differ from $q$ by a nonzero $2$-torsion amount. However, what these fibers might be in general is completely mysterious (even for $m=3$, where we've just proved that the statement of \ref{main conjecture} is true).
\end{rmk}

\section{Proof of the main results}

\subsection{Deformation to an elliptic surface} In this section, we state and prove two versions of the main theorem, which in particular imply Theorem \ref{clear main theorem}. The proof of the theorem is a short degeneration argument. The idea to approach interpolation problems on surfaces by degenerating the underlying surface is by now standard ~\cite{[CM98], [CM00], [CDM09], [DL05], [Hui13]}. The choice of the degeneration was inspired by \cite{[Ch99]}. Finally, in \S\S3.3, we prove Theorem \ref{example theorem}.

\begin{prop}\label{main theorem}

Let $\pi:X \to B$ be a smooth projective family of surfaces over a smooth affine curve $B$ and let ${\sh L}_X \in {\mathrm{Pic}}(X/B)$ be relatively ample. Let $X_t = \pi^{-1}(t)$ and ${\sh L}_t = {\sh L}_X|_{X_t}$ for any $t \in B$. Fix $b \in B(\kk)$. Assume that 
\begin{equation}\label{hi vanishing}
\mathrm{H}^i(X_b,{\sh L}_b) = 0 \text{ for } i =1,2
\end{equation}
and that the central fiber $X_b$ has an elliptic fibration $f:X_b \to {\mathbb P}^1$ with the following properties:

$\bullet$ For general $s \in {\mathbb P}^1$, the following short exact sequence on $f^{-1}(s)$ 
$$ 0 \longrightarrow {\sh N}_{f^{-1}(s)/X_b} \longrightarrow {\sh N}_{f^{-1}(s)/X} \longrightarrow {\sh O}_{f^{-1}(s)} \otimes T_b B \longrightarrow 0 $$ 
is not split;

$\bullet$ There exists a fixed section $G \subset X_b$ of $f$, such that any divisor $D \in |{\sh L}_b|$ is the sum of $G$ and $\dim |{\sh L}_b|$ (mobile) fibers of $f$.

Let $t \in B$ general and $(S,{\sh L}) = (X_t,{\sh L}_t)$. If $m_1,m_2,...,m_n \geq 1$, then
\begin{equation}\label{(3.1)}
\dim |{\sh L}(m_1,m_2,...,m_n)| \leq \max \left\{ -1, \dim |{\sh L}| - \sum_{i=1}^{n} \lambda(m_i) \right\},
\end{equation}
where $\lambda(m_i)$ is either the $\lambda$ associated to a fiber of $f$ if this fibration is isotrivial, or $\lambda^\mathrm{gen}(m_i)$ if the fibration is not isotrivial, cf. Definition \ref{definition 2.4}.

\end{prop}
 
\begin{proof}  
Let $r_1,r_2,...,r_n \in {\mathbb P}^1$ general points and $E_1,E_2,...,E_n \subset X_b$ the corresponding elliptic fibers. For each $i$, choose a general point $p_i \in E_i$ and a general tangent vector $v_i \in T_{p_i}X$ such that $(\mathrm{d}\pi)_{p_i}(v_i)=\frac{\partial}{\partial t} \neq 0$, where $\frac{\partial}{\partial t}$ stands for a  nonzero tangent vector at $b \in B$. Possibly after an \'{e}tale base change, we can find sections $\xi_i:B \to X$ of $X \to B$ such that $\xi_i(b) = p_i$ and $(\mathrm{d}\xi_i)_b(\frac{\partial}{\partial t})=v_i$. Note that \'{e}tale base changes preserve the property in the first bullet. Indeed, the normal bundle ${\sh N}_{f^{-1}(s)/X}$ only depends on the first order thickening $\pi^{-1}(2b)$ of $X_b$ in $X$ and on $f^{-1}(s) \subset \pi^{-1}(2b)$. However, for any \'{e}tale base change $B' \to B$ and preimage $b'$ of $b$, the map $X' = X \times_B B' \to X$ induces an isomorphism between the first order thickenings of $X_b$ in $X$ and $X'_{b'}$ in $X'$ respectively and the isomorphism trivially maps the respective copies of $f^{-1}(s)$ to each other, so the normal bundle remains isomorphic to the Atiyah bundle. In particular, the exact sequence cannot split.

Note that (\ref{hi vanishing}) implies that $\mathrm{H}^i(X_t,{\sh L}_t) = 0$, $i > 0$, for all $t$ in a Zariski neighborhood of $b$. We may shrink $B$ to this open subset. By the cohomology and base change theorem, $\pi_*{\sh L}$ is locally free and, moreover, $\pi_*{\sh L} \otimes \kappa_t \cong \mathrm{H}^0(X_t,{\sh L}_t)$ for all $t \in B$. Let $R$ be the classical projectivization of $\pi_* {\sh L}$. Then $R$ is a projective bundle with fibers ${\mathbb P}^{\dim |{\sh L}_b|}$ if $|{\sh L}_b| \neq \emptyset$.

Let $\smash{ {\sh E} =  {\sh L}_X \otimes \prod_{i=1}^n {\sh I}^{m_i}_{\xi_i(B),X} }$ and ${\sh F} = \pi_* {\sh E}$. Clearly, ${\sh E}$ is torsion free and hence so is ${\sh F}$, thus ${\sh F}$ is locally free because it is a torsion free sheaf over a smooth curve. Let $B^\circ = B \backslash \{b\}$ and $\pi^\circ: X \backslash X_b \to B^\circ$ the corresponding family over the punctured $B$. By the semicontinuity part of the cohomology and base change theorem, after possibly further shrinking $B$, we may assume that the function $t \mapsto h^0(X_t,{\sh E}_t)$ is constant, possibly with the exception of the origin $b$, where it might jump up (on the other hand, the generic point of $B$ does not require any special treatment). By the cohomology and base change theorem again, ${\sh F} \otimes \kappa_t \cong \mathrm{H}^0(X_t,{\sh E}_t)$ for all $t \neq b$.

Let ${\sh F}^\circ$ be the restriction of ${\sh F}$ to $X \backslash X_b$ and $P^\circ$ the (classical) projectivization of ${\sh F}^\circ$. Since $P^\circ$ is trivializable, there is no issue with choosing sections of $P^\circ$, i.e. flat families of divisors $D_t \in |{\sh L}_t(m_1,m_2,...,m_n)|$ relative to $\xi_1(t),\xi_2(t),...,\xi_n(t)$, for each $t \neq b$. Let $D^\circ$ be the total space of such a family of divisors and let $D_X$ be its Zariski closure in $X$. 

\begin{lem}\label{lemma inside prop}
If $D_b$ is the fiber of $D_X$ over $b$, then $D_b$ contains $E_i$ with multiplicity at least $\lambda(m_i)$, for all $1 \leq i \leq n$.
\end{lem}

\begin{proof}
Let $Y$ be the blowup of $X$ along $E_1 \cup E_2 \cup ... \cup E_n$. The central fiber $Y_b$ has $n+1$ irreducible components: one isomorphic to $X_b$ and $n$ geometrically ruled surfaces $S_1$, $S_2$, ..., $S_n$. The condition in the first bullet implies that each $S_i$ is the Atiyah ruled surface over $E_i$ and, moreover, $E_i^\infty = S_i \cap X_b$ is the corresponding distinguished section of each such surface. A key observation is that $E_i^\infty$ has trivial normal bundle both inside $S_i$ and inside $X_b$. Of course, $\xi_i$ lifts to $Y$ and, by a slight abuse of notation, we will continue to denote the lift by $\xi_i$. By construction, $\xi_i(b) = x_i$ is a \emph{general} point in $S_i$. 

Let $D_Y$ be the closure of $D^\circ$ inside $Y$, $D'_b = D_Y \cap Y_b$ and ${\sh L}_Y = {\sh O}_Y(D_Y)$. Then $D'_b$ has multiplicity at least $m_i$ at the point $x_i \in S_i$. Let ${\sh L}_i = {\sh O}_{S_i}(D_i)$ be the restriction of ${\sh L}_Y$ to $S_i$, where $D_i = D'_b \cap S_i$. Simply by construction, the restriction of ${\sh L}_i$ to $E_i^\infty$ is isomorphic to the restriction of ${\sh L}_Y|_{X_b}$ to $E_i^\infty$, but the latter is just ${\sh O}_{E_i^\infty}(q_i)$ due to the condition in the second bullet and the observation that the normal bundle of $E_i^\infty$ in $X_b$ is trivial, where $q_i = G \cap E_i$ on the original $X_b$. Let $F_{q_i} \subset S_i$ be the fiber of $q_i$. Hence, ${\sh L}_i \cong {\sh O}_{S_i}(F_{q_i} + \ell_i E_i^\infty)$ for some $\ell_i \geq 0$ by the description of $\mathrm{Pic}(S_i)$. Then $\ell_i \geq \lambda(m_i)$, because $D_i$ has multiplicity at least $m_i$ at $x_i$. However, $\ell_i$ is precisely the multiplicity with which $E_i$ is contained in $D_b \subset X_b \subset X$, so we're done. Note that in this argument, it is irrelevant whether $D_i$ contains $E_i^\infty$ or not. 
\end{proof}

Let $R^\circ$ be the restriction of $R$ to $B^\circ$. Of course, $P^\circ$ is a projective subbundle of $R^\circ$, so we obtain a section over $B^\circ$ of the Grassmannian bundle ${\mathbb G}({\mathrm{rank}{\sh F}}-1,R) \to B$. By properness, this section extends to a section over $B$, so $P^\circ$ can be extended to a projective subbundle $P \subseteq R$. However, since $B$ is affine, $P$ and $R$ can be simultaneously trivialized. Then, it is clear that it is possible to choose sections of $P^\circ \to B^\circ$ as in the paragraph preceding Lemma \ref{lemma inside prop} whose extensions over $B$ pass through any chosen point of $P_b$. Then the lemma implies that
$$ P_b(\kk) \subseteq \left\{ [D_b] \in |{\sh L}_b| = R_b(\kk): \mathrm{coeff}_{E_i} D_b \geq \lambda(m_i) \right\}, $$
hence
$$ \mathrm{rank} {\sh F} - 1 \leq \max \left\{ -1, \dim |{\sh L}_b| - \sum_{i=1}^{n} \lambda(m_i) \right\} $$
and the conclusion of the proposition follows since $\mathrm{rank} {\sh F} = h^0({\sh E}_t)$ and $\dim |{\sh L}_b| = \dim |{\sh L}_t|$.  \end{proof}

\subsection{Some special cases} Below are some applications of Proposition \ref{main theorem}.

\begin{thm}\label{main theorem 2}
Assume that $\mathrm{char}(\kk)=0$. Let $M \in \{1,2,3,...,\infty\}$. 

(a) Let $S$ be the Atiyah ruled surface and ${\sh L}_\ell = {\sh O}_S(F_q+\ell E_\infty)$, with the same notation as above. Then the linear system $|{\sh L}_\ell(m_1,m_2,...,m_n)|$ is nonspecial for all $\ell$ and all $m_1,m_2,...,m_n \leq M$, if and only if this condition holds for $n=1$.

(b) Assume that the condition in part (a) is true (say, for general $E$). Then, if $(S,{\sh L})$ is a general genus $g$ primitively polarized K3 surface, the linear system $|{\sh L}(m_1,m_2,...,m_n)|$ is nonspecial for all $\ell$ and all $m_1,m_2,...,m_n \leq M$. 

(c) Assume that the condition in part (a) is true (say, for general $E$). Let $d$, $m_1,m_2,...,m_n$ be positive integers, $n \geq 10$, such that 
\begin{equation}
\begin{aligned}
m_j=
\begin{cases}
d & \text{ if } j \leq 8, \\
d-1 & \text{ if } j = 9, \\
\leq M & \text{ if } j \geq 10.
\end{cases}
\end{aligned}
\end{equation}
Then the linear system $|{\sh L}_{3d}(m_1,m_2,...,m_n)|$ of degree $3d$ plane curves with $n$ general fat points of the specified multiplicities is nonspecial. 
\end{thm}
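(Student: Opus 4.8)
The plan is to obtain all three parts from Proposition~\ref{main theorem}, applied to a well-chosen degeneration of the surface in question to an elliptic surface over $\mathbb{P}^1$, together with the identification of each $\lambda(m_i)$ with $\binom{m_i+1}{2}$ coming from the $n=1$ hypothesis. Concretely, if $X\to B$ is a smooth projective family with the given surface as general fibre and with central fibre $X_b$ an elliptic surface $f\colon X_b\to\mathbb{P}^1$ satisfying the two bullet conditions, then Proposition~\ref{main theorem} yields $\dim|{\sh L}(m_1,\dots,m_n)|\le\max\{-1,\dim|{\sh L}|-\sum_i\lambda(m_i)\}$; since the expected dimension (\ref{expdim}) is always a lower bound, it suffices to check that each $\lambda(m_i)$, which by construction is the $\lambda$ of the Atiyah ruled surface over the relevant elliptic fibre (the surfaces $S_i$ produced by the blow-up in the proof of Proposition~\ref{main theorem}), equals $\binom{m_i+1}{2}$ for $m_i\le M$. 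In characteristic $0$ one has $\lambda_0(m,E,\cdot)\le\binom{m+1}{2}$ trivially, and equality at the generic class ${\sh D}$ is exactly the $n=1$ assertion; this gives nonspeciality, the ``only if'' half of (a) being immediate.

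\textbf{Part (a).} I would take $X\to B$ ($0\in B$) with $X_t=\mathbb{P}({\sh V}_t)$, where $0\to{\sh O}_E\to{\sh V}_t\to{\sh O}_E\to 0$ has extension class $t\in\mathrm{Ext}^1_E({\sh O}_E,{\sh O}_E)\cong\kk$; thus $X_t$ is the Atiyah ruled surface for $t\ne 0$, while $X_0=\mathbb{P}({\sh O}_E^{\oplus 2})=E\times\mathbb{P}^1$. On $X_0$ take $f=\mathrm{pr}_{\mathbb{P}^1}$, an isotrivial elliptic fibration with every fibre $\cong E$; the distinguished section $E_\infty=\mathbb{P}({\sh O}_E\subset{\sh O}_E^{\oplus 2})$ degenerates to one fibre of $f$, the ruling fibre $F_q$ degenerates to a \emph{section} $G$ of $f$, and ${\sh L}_\ell$ degenerates to $\mathrm{pr}_E^*{\sh O}_E(q)\otimes f^*{\sh O}_{\mathbb{P}^1}(\ell)$. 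Since $h^0({\sh O}_E(q))=1$, every divisor in $|{\sh L}_0|$ is $G$ plus $\ell=\dim|{\sh L}_0|$ mobile fibres of $f$, which is the second bullet. For the first bullet one checks that on a general fibre $f^{-1}(s)\cong E$ the extension $0\to{\sh N}_{f^{-1}(s)/X_0}\to{\sh N}_{f^{-1}(s)/X}\to{\sh O}_{f^{-1}(s)}\otimes T_bB\to 0$ of trivial bundles is non-split; this extension class is governed by the class defining ${\sh V}$ and is nonzero (an elliptic curve does not deform out of $X_0$ into the Atiyah surfaces $X_t$). Proposition~\ref{main theorem} (isotrivial case) then applies with $\lambda(m_i)=\lambda_0(m_i,E,{\sh D}_i)$ for the generic ${\sh D}_i$, and one concludes as above.

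\textbf{Parts (b), (c).} These are parallel, but with a non-isotrivial central fibration, so Proposition~\ref{main theorem} produces $\lambda^{\mathrm{gen}}(m_i)$ and we use part (a) for general $E$, i.e.\ $\lambda^{\mathrm{gen}}_0(m)=\binom{m+1}{2}$ for $m\le M$. For (b) I would degenerate the general genus $g$ primitively polarized K3 $(S,{\sh L})$ to an elliptic K3 $X_0$ with a section $\Sigma$ and $\mathrm{Pic}(X_0)=\langle\Sigma,F\rangle$ ($\Sigma^2=-2$, $F^2=0$, $\Sigma\cdot F=1$) and ${\sh L}_0=\Sigma+gF$, a primitive class of square $2g-2$, ample for $g\ge 3$; such a degeneration exists because the relevant Noether--Lefschetz locus of elliptic K3's with a section lies in the closure of the genus $g$ polarized moduli space (surjectivity of the period map). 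A short cohomology computation gives $h^0(\Sigma+gF)=g+1=h^0(gF)$, so $|{\sh L}_0|=\Sigma+|gF|$ and the second bullet holds with $G=\Sigma$. For (c) I would specialize $p_9$ to the ninth base point of the pencil of plane cubics through the general points $p_1,\dots,p_8$, so that the generic such degeneration of $\mathrm{Bl}_9\mathbb{P}^2$ is a rational elliptic surface $\mathcal{R}\to\mathbb{P}^1$ with $-K_{\mathcal{R}}=F$; with the stated multiplicities the divisor class cutting out the linear system becomes $E_9+\delta F$ for the section $E_9$ and $\delta=\dim$, and again $h^0(E_9+\delta F)=\delta+1=h^0(\delta F)$ gives $|{\sh L}_0|=E_9+|\delta F|$. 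In (c) the remaining general points $p_{10},\dots,p_n$ serve as the fat points $\xi_i$ of Proposition~\ref{main theorem}, applied to $\mathcal{R}$ with the line bundle $E_9+\delta F$; one first notes that the planar nine-point subsystem is itself nonspecial (del Pezzo theory together with semicontinuity onto $\mathcal{R}$), so that $\dim|{\sh L}|$ is the expected value.

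\textbf{Main obstacle.} The step I expect to be hardest is the first bullet: arranging the degeneration so that the relevant normal-bundle extension is non-split on a general elliptic fibre of $f$. For (a) this is essentially immediate from the Atiyah extension class. For (b) and (c) it should hold for a sufficiently general one-parameter degeneration, respectively for a general transversal path of the moving point $p_9$, but this needs a genuine (if not deep) verification rather than an appeal to genericity, and it is the one place where the argument could require care in choosing the family. The remaining points---relative ampleness of ${\sh L}_X$, and nonspeciality of the auxiliary systems (the genus $g$ polarization on the elliptic K3, the class $E_9+\delta F$ on $\mathcal{R}$, and the nine-point planar system)---are routine once the central fibre is understood, following from semicontinuity and the explicit structure of $|{\sh L}_0|$. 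Finally, taking $M=3$ and invoking Corollary~\ref{ugly}, which gives $\lambda^{\mathrm{gen}}_0(m)=\binom{m+1}{2}$ for $m\le 3$ unconditionally, makes the conclusions of (b) and (c) unconditional; in particular this yields Theorem~\ref{clear main theorem}.
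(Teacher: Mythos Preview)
Your proposal is correct and follows essentially the same approach as the paper: each part is obtained from Proposition~\ref{main theorem} via the very same degenerations (the Atiyah surface to $E\times\mathbb{P}^1$ through the extension class $t$, the K3 to an elliptic K3 with a $(-2)$-section, and the nine blown-up points to the base locus of a pencil of cubics). The one place where the paper says more than you do is the first-bullet verification in~(a), which it carries out by a short contradiction argument over $\kk[t]/(t^2)$ showing that a split normal bundle would force the restriction of (\ref{???}) to $E\times\mathbb{I}_2$ to split; for~(b) it defers this point to \cite{[Ch99]}, and for~(c) it leaves it to the reader---so your flagging of this step as the main obstacle is accurate.
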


Note that although the requirement in part (c) is obviously very restrictive, it may still cover a large number of cases up to Cremona transformations. We lack a clear statement or explanation for this elementary arithmetical assertion.

\begin{proof}
All three parts of the theorem will be proved by applying directly Proposition \ref{main theorem}, so all we need to do is to construct the suitable specializations in all cases. We remark that verifying condition (\ref{hi vanishing}) is straightforward in all cases (and perhaps also well-known). Indeed, the $\mathrm{H}^2$-vanishing is trivial in all cases by Serre duality. The arguments given below required for the verification of the second bullet give as immediate corrolaries the values of $h^0({\sh L}_b)$; the vanishing of $\mathrm{H}^1({\sh L}_b)$ boils down to a purely numerical verification with Riemann-Roch, which is left to the reader.

Saying that the statement in part (a) of the theorem holds for $n=1$ is equivalent to saying that Conjecture \ref{main conjecture} holds in characteristic zero for all $m \leq M$ (actually, for the given elliptic curve, if we're not assuming it to be general).

(a) There exists a family of surfaces $X \to \mathrm{Spec}({\kk}[t]) = {\mathbb A}^1$ such that the restriction $\mathrm{Spec}({\kk}[t]_{(t)}) \times_{{\mathbb A}^1} X$ is the trivial family $S \times \mathrm{Spec}({\kk}[t]_{(t)})$ with fiber $S$, whereas the central fiber $X_{(t)}$ is isomorphic to $E \times {\mathbb P}^1$. Indeed, the extension
\begin{equation}\label{???}
0 \longrightarrow {\sh O}_{E \times {\mathbb A}^1} \longrightarrow {\sh W} \longrightarrow {\sh O}_{E \times {\mathbb A}^1} \longrightarrow 0
\end{equation}
corresponding to $\smash{ t \otimes 1 \in \mathrm{Ext}^1({\sh O}_{E \times {\mathbb A}^1},{\sh O}_{E \times {\mathbb A}^1}) \cong {\kk}[t] \otimes {\kk} }$ seen over the affine line is a family of Atiyah bundles specializing to a trivial rank two bundle and we may construct $\smash{ X = \underline{\mathrm{Proj}}_{E \times {\mathbb A}^1}\mathrm{Sym }({\sh W}^\vee) \stackrel{\tau}{\to} E \times {\mathbb A}^1 }$. Again, let $E \times {\mathbb A}^1 \cong E_\infty \times {\mathbb A}^1 \hookrightarrow X$ be the section corresponding to the first term in the extension above. 

The the divisor class in this case is ${\sh L} = {\sh O}_X(\ell E_\infty \times {\mathbb A}^1 + \tau^{-1}(q \times {\mathbb A}^1))$ and the elliptic fibration of the central fiber is obviously given by projection to the first factor. Note that ${\sh L}|_{X_{(t)}}$ is ${\sh O}_E(q) \boxtimes {\sh O}(\ell)$ on $X_{(t)} \cong E \times {\mathbb P}^1$, so the second bullet in the statement of Proposition \ref{main theorem} is verified. 

Verifying the first bullet requires a more subtle argument. Let $E_u$ be a curve of the form $\{\mathrm{pt}\} \times E \subset X_{(t)}$ other than the respective copy of $E_\infty$. Of course, the short exact sequence for the normal bundles of the inclusions $E_u \subset X_{(t)} \subset X$ shows that ${\sh N}_{E_u/X}$ is indeed an extension of the structure sheaf by itself. 

Assume by way of contradiction that the extension was split. This implies that $E_u$ admits a first order deformation inside $X$ which is flat over ${\mathbb I}_2 := \mathrm{Spec}({\kk}[t]/(t^2))$, which in turn induces a global section $z$ of ${\sh W}^{(2)} = {\sh W}|_{{\mathbb I}_2}$. Restricting (\ref{???}) to ${\mathbb I}_2$, we obtain a cohomology long exact sequence
\begin{equation}\label{????}
0 \longrightarrow \mathrm{H}^0( {\sh O}_{E \times {\mathbb I}_2} ) \longrightarrow  \mathrm{H}^0( {\sh W}^{(2)}) \stackrel{\theta}{\longrightarrow} \mathrm{H}^0( {\sh O}_{E \times {\mathbb I}_2} ) \longrightarrow \mathrm{H}^1( {\sh O}_{E \times {\mathbb I}_2} ).
\end{equation}
By construction, $\theta(z) \notin (t)$, which is the maximal ideal and hence the maximal submodule of $\mathrm{H}^0( {\sh O}_{E \times {\mathbb I}_2} ) \cong {\kk}[t]/(t^2)$, hence $\theta$ is surjective. Therefore, the last map in (\ref{????}) is identically zero, which implies that the restriction of (\ref{???}) to $E \times {\mathbb I}_2$ splits, contradiction.

Thus all requirements of Proposition \ref{main theorem} are satisfied and (\ref{(3.1)}) reads
\begin{equation}\label{repeat}
\dim |{\sh L}_\ell(m_1,m_2,...,m_n)| \leq \max \left\{ -1, \dim |{\sh L}_\ell | - \sum_{i=1}^{n} \lambda(m_i) \right\}.
\end{equation}
By the assumption for $n=1$, we have $\lambda(m_i) = {m_i+1 \choose 2}$ for all $i$, so the right hand side is the expected dimension and we're done.

(b) Consider a family $K \to B$ of smooth genus $g$ K3 surfaces such that the central fiber $K_b$ is a general elliptically fibered K3 surface over ${\mathbb P}^1$ with a $(-2)$-section which we'll denote by $G$. The polarization ${\sh L}_b$ on $K_b$ is ${\sh L}_b = {\sh O}_{K_b}(G+gF)$, where $F$ denotes an elliptic fiber. We refer the reader to ~\cite{[Ch99]} for an algebraic proof of the existence of such degenerations. Let $E$ be an arbitrary smooth fiber, whose intersection point with $G$ is denoted by $q$. First, we check that $|{\sh L}_b|$ satisfies the second bullet in the statement of Proposition \ref{main theorem}. Note that
$$ {\sh L}_b|_E = {\sh O}_{K_b}(G+gF)|_E = {\sh O}_E(q), $$
so all divisors in $|{\sh L}_b|$ have to intersect any elliptic fiber $E$ not contained in their support at the point $q=E \cap G$ because $q$ is not rationally equivalent to any other point on $E$. It follows that any $D \in |{\sh L}_b|$ is a sum of $g$ fibers and the section $G$. Indeed, since $(D \cdot F) = 1$, any irreducible component $C$ of $D$ must satisfy either $(C \cdot F) = 0$, case in which it is an elliptic fiber, or $(C \cdot F) =1$ and $C$ appears with multiplicity $1$ in $D$, when the arguments above imply that $C=G$ .

Second, we check that the first bullet in Proposition \ref{main theorem} is satisfied. The Kodaira-Spencer class of the first order deformation of $K_b$ is determined by the extension
$$ 0 \longrightarrow {\sh T}_{K_b} \longrightarrow {\sh T}_K|_{K_b} \longrightarrow {\sh O}_{K_b} \longrightarrow 0, $$
where the third nonzero term ought to be interpreted as the normal bundle of $K_b$ in $K$. We have a short exact sequence for the normal bundles of $E \subset K_b \subset K$
$$ 0 \longrightarrow {\sh N}_{E/K_b} \longrightarrow {\sh N}_{E/K} \longrightarrow {\sh N}_{K_b/K}|_E \longrightarrow 0. $$
Of course, the first and third nonzero terms are isomorphic to the structure sheaf ${\sh O}_E$, so ${\sh N}_{E/K}$ is an extension of the structure sheaf by itself. If the Kodaira--Spencer class is general in the sense of ~\cite[Remark 2.2]{[Ch99]}, an argument essentially identical to ~\cite[Proposition 2.1]{[Ch99]} shows that the extension above doesn't split. Hence we are indeed in the setup of Proposition \ref{main theorem}. Thanks to the assumption that (a) is true, we have $\lambda^\mathrm{gen}_0(m_i) = {m_i+1 \choose 2}$ for all $i$ and we're done.

(c) Let $\beta:\mathrm{Bl}_{\{p_1,p_2,...,p_9\}}{\mathbb P}^2 \to {\mathbb P}^2$ be the blowup of the projective plane at nine arbitrary points $p_1,p_2,...,p_9$ and $E_1,E_2,...,E_9$ the respective exceptional divisors. Consider the linear system ${\sh L} = \beta^*{\sh O}(3d) \otimes {\sh O}(-dE_1-...-dE_8-(d-1)E_9)$. Of course, the claim is equivalent to saying that $|{\sh L}(m_{10},m_{11},...,m_n)|$ has the expected dimension, if the nine blown up points are general. 

Imitating the idea in ~\cite[\S6]{[BrLe01]}, we allow the $9$ points to specialize to the base locus of a general pencil of cubics. In the special case, the elements of $|{\sh L}|$ are sums of $d$ members of the pencil and $E_9$. Indeed, we may argue similarly to part (b) above: the restriction of ${\sh L}$ to any smooth elliptic fiber $F$ is 
\begin{equation*}
\begin{aligned}
{\sh L}|_F &= \left[\beta^*{\sh O}(3d) \otimes {\sh O}(-dE_1-...-dE_8-dE_9)\right]|_F \otimes {\sh O}_F(E_9 \cap F) \\
&= {\sh O}(dF')|_F \otimes {\sh O}_F(E_9 \cap F) = {\sh O}_F(E_9 \cap F),
\end{aligned}
\end{equation*}
so all divisors in $|{\sh L}|$ are forced to intersect any fiber $F$ not contained in their support at $E_9 \cap F$, justifying the claim. The rest of the argument is also essentially identical to that in part (b) and it is left to the reader. \end{proof}

Corollary \ref{ugly} together with part (b) of the theorem above imply Theorem \ref{clear main theorem} stated in the introduction. 

\subsection{Proof of Theorem \ref{example theorem}} The emptiness in characteristic zero follows from (\ref{repeat}) and Proposition \ref{proposition 3.6}. Indeed, note that (\ref{repeat}) is true independently of the assumption in part (a) of Theorem \ref{main theorem}, which was only used in the proof after (\ref{repeat}) was stated. 

To prove non-emptiness in characteristic $p$, we construct the divisors explicitly using \ref{proposition 2.2} and simply count all degrees and multiplicities. Let $p_1,p_2,...,p_n$ be the $n$ general points inside $S$. Let $\Gamma$ be the disjoint union of the $n$ fat points $\mathrm{Spec}({\sh O}_{S,p_i}/\mathfrak{m}_{p_i}^p)$. For simplicity of notation, let $N = n {p+1 \choose 2}$. The map
$$ \mathrm{H}^0(S,{\sh O}_S(F_q + NE_\infty) ) \longrightarrow \mathrm{H}^0(\Gamma,{\sh O}_S(F_q + NE_\infty) \otimes {\sh O}_\Gamma) $$
has nonzero kernel for obvious dimension reasons, so there exists an effective divisor $D \in |F_q + NE_\infty|$ of multiplicity at least $p$ at each $p_i$. By Proposition \ref{proposition 2.2}, for each $i$, there exists an elliptic curve $E_i \in |pE_\infty|$ passing through $p_i$. The left hand side inequality in (\ref{0.2cond}) amounts to $\ell \geq N + \sum_{i=1}^n p(m_i-p)$, so
$$ D:=D_0 + \sum_{i=1}^n (m_i-p)D'_i + \left[\ell - N - \sum_{i=1}^n p(m_i-p)\right] E_\infty \in |F_q + \ell E_\infty | $$
is an example of an effective divisor in the desired class with multiplicity at least $m_i$ at $p_i$ for all indices $i$, which completes the proof.

\end{document}